\theoremstyle{plain}
\newtheorem{lemma}{Lemma}[section]
\newtheorem{proposition}[lemma]{Proposition}
\newtheorem{corollary}[lemma]{Corollary}
\newtheorem{theorem}[lemma]{Theorem}
\theoremstyle{plain}
\newtheorem{theoremN}{Theorem}
\theoremstyle{plain}
\newtheorem{definition}[lemma]{Definition}
\newtheorem{example}[lemma]{Example}
\newtheorem{examples}[lemma]{Examples}
\newtheorem{remark}[lemma]{Remark}
\newtheorem{remarks}[lemma]{Remarks}
\numberwithin{equation}{section}
\theoremstyle{nonumberplain}
\newtheorem{proof}{Proof}
\newcommand\bR{{\mathbb R}}
\newcommand\bZ{{\mathbb Z}}
\newcommand\cF{{\mathcal F}}
\newcommand\cO{{\mathcal O}}
\newcommand\cX{{\mathcal X}}
\newcommand\cY{{\mathcal Y}}
\DeclareMathOperator{\id}{id}
\DeclareMathOperator{\Aut}{\mathrm{Aut}}
\newcommand{\qedhere}{\mbox{}\hfill\ensuremath{\blacksquare}}
\title{P\l onka bi-magmas and the set-theoretic Yang--Baxter equation}
\author{A.L. Agore, A. Chirvasitu and G. Militaru}
\begin{document}

\date{}

\newcommand{\Addresses}{{
  \bigskip
  \footnotesize


\author{A. L. Agore:}
 \textsc{Simion Stoilow Institute of Mathematics of the Romanian Academy}
  \par\nopagebreak
  \textsc{Bucharest, P.O. Box 1-764, 014700, Romania}
  \par\nopagebreak
  \textit{E-mail address}: \texttt{ana.agore@gmail.com}

  \medskip

  \author{A. Chirvasitu:}
  \textsc{Department of Mathematics, University at Buffalo}
  \par\nopagebreak
  \textsc{Buffalo, NY 14260-2900, USA}
  \par\nopagebreak
  \textit{E-mail address}: \texttt{achirvas@buffalo.edu}

  \medskip

\author{G. Militaru:}
  \textsc{Faculty of Mathematics and Computer Science, University of Bucharest}
  \par\nopagebreak
  \textsc{Str. Academiei 14, RO-010014 Bucharest 1, Romania}
  \par\nopagebreak
  \textit{E-mail address}: \texttt{gigel.militaru@gmail.com}


}}

\maketitle

\begin{abstract}
We introduce a new variety of non-associative algebras, called \emph{P{\l}onka bi-magmas}, and use them to construct novel solutions to the set-theoretic Yang-Baxter (YB) equation. We establish explicit structural and classification results for P{\l}onka bi-magmas, which allow for a detailed study of the induced families of YB-solutions. In particular, we identify and classify several naturally arising classes of YB-solutions from the universal algebra perspective, including the so-called bi-connected and ideal-simple ones. For instance, we prove that there are only countably many isomorphism classes of induced YB-solutions which are ideal-simple, and characterize them in terms of the odometer transformations familiar from ergodic theory. In particular, if $X$ is a finite set with $|X| = n$, then the number of isomorphism classes of all ideal-simple YB-solutions on $X$ is equal to the sum of all divisors of $n$.

\end{abstract}

\noindent {\em Key words: quantum Yang-Baxter equation; universal algebra; variety of algebras; magma; ideal-simple, incompressible and (bi)-connected maps; partitions}

\vspace{.5cm}

\noindent{MSC 2020: 17B38; 05A18; 08A30; 08A35; 08A62; 08B25; 18A40; 18A30}


\section*{Introduction}

Over the past half century, the celebrated quantum Yang--Baxter equation (also known as the 2-simplex or triangle equation) has played a pivotal role in mathematical physics and has significantly shaped many branches of pure mathematics. Its broad reach encompasses the field of solvable models of statistical mechanics (where it originated, in the independent work of Yang \cite{Yang} and Baxter \cite{baxter}), the theory of quantum groups, invariants of knots and three-dimensional manifolds, braided monoidal categories and conformal field theory. Furthermore, its study has had a significant impact on group theory, combinatorics and non-commutative algebraic geometry.\\
Recall that a linear map $R: V\otimes V \to V\otimes V$ is a solution of the \emph{quantum Yang-Baxter equation} if
\begin{equation*}
R^{12} R^{13} R^{23} = R^{23}R^{13}R^{12}
\end{equation*}
under the usual map composition in the endomorphism algebra ${\rm End} (V\otimes V \otimes V)$, where $V$ is a vector space over a field $k$ and $R^{ij} : V\otimes V \otimes V \to V\otimes V \otimes V$ acts as $R$ on the $i$-th and $j$-th tensor factors and as the identity on the remaining factor.\\
Beyond its purely theoretical importance, the problem of classifying all Yang-Baxter solutions for a given finite dimensional vector space $V$ is central to constructing integrable models in mathematical physics. In full generality, however, the problem is notoriously difficult: it has up to this point been completely solved only in dimension $2$ by Hietarinta \cite{hiet_1} whereas in dimension $3$ only partial classifications have been obtained using computer methods (see e.g. \cite{hiet_2}). Notably, Radford \cite{radford_1} approached the problem algebraically by building on the well-known FRT-theorem of Faddeev, Reshetikhin and Takhtajan \cite{FRT}.\\
These difficulties prompted Drinfel'd \cite[Section 9]{dr} to adopt a different perspective by proposing to restrict to the classification of those YB-solutions arising from a given {\it basis} $X$ of the vector space $V$: in other words, studying the Yang-Baxter equation at the level of sets. Given a set $X$, a map $R: X\times X \to X\times X$ is a solution of the \emph{Yang-Baxter equation} (resp. \emph{braid equation}) if
\begin{equation}\label{eq:qybe}
R^{12} R^{13} R^{23} = R^{23}R^{13}R^{12}, \qquad  ({\rm resp.} \,\, R^{12}R^{23}R^{12} = R^{23}R^{12}R^{23})
\end{equation}
as maps $X\times X \times X \to X\times X \times X$ under the usual composition, where $R^{12} = R \times {\rm Id}_X$, $R^{23} = {\rm Id}_X \times R$ and $R^{13} = ({\rm Id}_X \times \tau_X) \circ R^{12} \circ ({\rm Id}_X \times \tau_X)$, with $\tau_X$ being the flip map. The above two equations are equivalent: $R$ is a solution of the braid equation if and only if $\tau_X \circ R$ is a solution of the Yang-Baxter equation, so we can focus on studying just one of them.
Throughout, we work with the Yang-Baxter equation and a map $R: X\times X \to X\times X$ satisfying the Yang-Baxter equation will be referred to as a \emph{YB-solution}. Such solutions are not, a priori, required to satisfy any additional constraints, in accordance with the original formulation of the problem \cite{dr}. It is worth noting that neither the FRT theorem \cite{FRT} nor the monograph \cite[Definition 2.1.1]{lamrad} devoted to it imposes any additional assumptions. However, in concrete classification problems it is both natural and desirable to focus on particular subclasses of solutions.\\
Even in its set-theoretic form, the problem has proved to be highly challenging and has led to a wealth of mathematical developments (see \cite{BES, CV} and the references therein). An early contribution to the problem was made by Weinstein and Xu \cite[\S 7]{WeXu}, who construct several families of YB-solutions arising from Poisson Lie groups. The problem was subsequently studied by Gateva-Ivanova and Van den Bergh \cite{GIVB}, Etingof, Schedler and Soloviev \cite{etingof}, and Lu, Yan and Zhu \cite{LuYZ}. These influential papers laid the foundations for the study of set-theoretic YB-solutions. In \cite{etingof}, two fundamental classes of YB-solutions were introduced: the \emph{non-degenerate} and the \emph{indecomposable} ones. The first successful classification results for these classes of solutions \cite{etingof, etingof2, soloviev} gave rise to an extensive literature devoted almost entirely to their study, often under additional assumptions such as involutivity or unitarity, or with slight variations of the original definitions.\\
The approach we propose here is based on universal algebras whose underlying sets $X$ are equipped with either one or two binary operations, referred to as \emph{magmas} and \emph{bi-magmas}, respectively. Further details and any undefined terminology are provided in Section \ref{se:prel}. In order to explain how the bi-magma setting fits naturally in the study of YB-solutions, recall that for a given set $X$ the following  map is obviously bijective:
$$
{\rm Hom}_{\rm Set} \, (X \times X, \, X \times X) \to {\rm Hom}_{\rm Set} \, (X \times X, \, X) \times {\rm Hom}_{\rm Set} \, (X \times X, \, X), \quad
R \mapsto (\pi_1 \circ R, \, \pi_2 \circ R)
$$
where $\pi_1$, $\pi_2 \colon X\times X \to X$ denote the projection maps. In other words, we have a one-to-one correspondence
between the set of all functions $R: X\times X \to X \times X$ and the set of all bi-magma structures on the underlying set $X$. Furthermore, the bijection is such that the map $R = R_{(\cdot, \, \ast)} : X\times X \to X \times X$ associated to the bi-magma $(X, \, \cdot, \, \ast)$ is given for any $x$, $y\in X$ by:
\begin{equation}\label{notatieR0}
R(x, \, y) = R_{(\cdot, \, \ast)} \, (x, \, y) = (x\cdot y, \, x \ast y)
\end{equation}
where we denote $x \cdot y = (\pi_1 \circ R) (x,y)$ and $x \ast y = (\pi_2 \circ R) (x,y)$.\\
Throughout, rather than considering maps $R : X\times X \to X \times X$ we will often work with bi-magmas $(X, \, \cdot, \, \ast)$ in which case we implicitly assume that $R = R_{(\cdot, \, \ast)}$ is given by \eqref{notatieR0} and refer to it as the \emph{canonical map} associated with the bi-magma $(X, \, \cdot, \, \ast)$. Adopting the bi-magma terminology offers numerous advantages. In particular, it provides a new perspective on well-established concepts from the Yang-Baxter literature. For example, as explained in Remark~\ref{re:nede-quasigr}, the bi-magma formalism reveals that the classical and extensively studied notion of non-degeneracy \cite{etingof} is only one instance of a broader family of non-degeneracy conditions that can be imposed on a YB-solution.\\
Returning to Drinfel'd's problem and writing an arbitrary map $R: X\times X \to X\times X$ as $R (x, \, y) = (x \cdot y, \, x\ast y)$ for some bi-magma $(X, \, \cdot, \, \ast)$, it is easily shown that $R = R_{(\cdot, \, \ast)}$ is a YB-solution if and only if the following conditions hold
\begin{align}
  (x\cdot y) \cdot z &= \bigl(x \cdot (y \ast z) \bigl) \cdot (y \cdot z)\label{YB1a}\\
  \bigl( x \cdot (y \ast z) \bigl) \ast (y\cdot z ) &= (x \ast y) \cdot \bigl( (x\cdot y) \ast z  \bigl)\label{YB2a} \\
  x \ast (y \ast z) &=  (x \ast y) \ast \bigl( (x\cdot y) \ast z\bigl)\label{YB3a}
\end{align}
for all $x$, $y$, $z\in X$ (see e.g. \cite[Lemma 2.1]{rusii}). A triple $(X, \, \cdot, \, \ast)$ satisfying the compatibilities \eqref{YB1a}-\eqref{YB3a} will be called a \emph{Yang-Baxter bi-magma} (Definition \ref{def:ybbimagm}) (or a \emph{set-theoretic non-associative Yang-Baxter algebra}) and the category of all Yang-Baxter bi-magmas is denoted by ${}_{\rm YB}{\rm BiMag}$. With this concept in hand, the problem can be restated as follows:\\
\emph{For a given (finite) set $X$ describe and classify up to isomorphism all Yang-Baxter bi-magmas definable on $X$.}\\
The identities \eqref{YB1a} and \eqref{YB3a} show that the binary operations $\cdot$ and $\ast: X \times X \to X$ are far from being associative which suggests the inherent difficulty of the problem.
Furthermore, the defining equations \eqref{YB1a}-\eqref{YB3a} make it plain that the category of Yang-Baxter bi-magmas is a \emph{variety of algebras} in the sense of \emph{universal algebra} (\cite[\S 3.A]{adamek_ros}). This observation provides a natural motivation for approaching the subject from the perspective of universal algebra. In fact, the link with universal algebra is implicit in the literature \cite{cat_maz_ste, Deho, PPZ, SV}, even though it is not typically made explicit. A perusal of the universal-algebra literature highlights a beautiful structure theorem on a certain class of universal algebras proved by P{\l}onka's \cite[Theorem 1]{plonka2}: there are only four types of varieties of algebras having exactly $n$ $n$-ary operations depending on all variables for each positive integer $n \geq 1$. One of these structures, we refer to as a \emph{$2$-cyclic magma} (or a \emph{$2$-cyclic groupoid} in \cite{plonka}; see Definition \ref{def:2cyc}). Now, a certain \emph{canonical map} associated to any $2$-cyclic magma is a unitary and diagonal YB-solution (Corollary \ref{cor:famI}). \\
Furthermore, from this perspective, it is to be expected that the various group-theoretic constructs familiar from the Yang-Baxter literature over the past 20 years (\cite[Corollary 3.14]{bai_guo}, \cite[Theorem 2.9]{etingof}, \cite[Theorem 1]{LuYZ}) may overlook large families of solutions. This further motivates the pursuit of alternative perspectives and strategies beyond those currently prevalent in the literature, which may in turn lead to new classes of YB-solutions.

\subsection*{The paper's contents}

Section \ref{se:prel} recalls basic concepts used throughout the paper such as universal algebras or varieties of algebras. All universal algebras used in this paper will be sets $X$ equipped with one or two binary operations, referred to, respectively, as {\it magmas} and {\it bi-magmas}. \\
Section \ref{se:sect2a} is somewhat combinatorial in nature. We introduce an important family of functions $R: X\times X \to X\times X$ on sets, called \emph{ideal-simple} (Definition \ref{def:solsimple}) from the viewpoint of universal algebras. In order to classify all ideal-simple Lyubashenko YB-solutions of Example \ref{exs:bmlyb}(\ref{item:LB}) we introduce in Definition \ref{def:simple} the notion of an {\it incompressible} family $\cF$ of commuting self-maps $X\to X$, namely one which leaves no proper non-empty subset of $X$ invariant. Our first results, Theorem \ref{th:allcomm} and Corollary \ref{cor:howmany}, classify all incompressible families of self-maps. As an application, we classify the ideal-simple Lyubashenko YB-solutions: they are ''almost" finite, and describable by pairs of functions consisting of a cycle and an \emph{odometer} or \emph{adding machine map}, in ergodic-theoretic language \cite[p.266]{np_ergodic}. \\
In Section \ref{se:plonkabi} we introduce \emph{P{\l}onka bi-magmas}, the main characters of this paper, and show how they naturally give rise to YB-solutions. Motivated by P{\l}onka's $2$-cyclic magmas \cite[Lemma 3]{plonka2}, we first consider in Definitions \ref{def:rkm} and \ref{def:lkm} the more general concepts of \emph{right} and \emph{left P{\l}onka magmas}, respectively. For example, the variety ${\rm Mag}^{\rm rP}$ of right P{\l}onka magmas consists of magmas $(X, \, \cdot)$ such that for any $x$, $y$, $z\in X$:
\begin{equation*}
  (x \cdot y) \cdot z = (x \cdot z) \cdot y,    \qquad x \cdot (y \cdot z) = x \cdot y.
\end{equation*}
Their structure is given in Theorem \ref{th:structRPm}: every right P{\l}onka magma is uniquely determined by a partition of the underlying set $X = \coprod_{i\in I} \, X_i$ together with a family of self-maps $f_{i}^{j} : X_{i} \to X_{i}$, for all $i$, $j\in I$, satisfying a certain commutativity condition. We further prove in Theorem \ref{th:uniqpart} that these decompositions are unique and classify the corresponding structures in Theorem \ref{th:isopart} via the notion of a \emph{connected} family $\cF$ of functions $X\to X$ introduced in Definition \ref{def:connfunc}. The classification of all \emph{ideal-simple} P\l onka magmas is given in Theorems \ref{th:lsimp} and \ref{th:simplestr}: the first result classifies all objects that are left (= two sided) ideal-simple in ${\rm Mag}^{\rm rP}$ while the second result classifies all those that are right ideal-simple. \\
Definition \ref{def:kbimag} introduces the notion of a {\it P{\l}onka bi-magma}: a set $X$, with two binary operations $\cdot$ and $\ast: X \times X \to X$ such that $(X, \, \cdot)$ is a right P{\l}onka magma, $(X, \, \ast)$ is a left P{\l}onka magma (i.e. the opposite magma $(X, \, \ast^{\rm op})$ is a right P{\l}onka magma) satisfying three compatibility conditions as listed in the aforementioned definition. Theorem \ref{th:teorema1} proves that every P{\l}onka bi-magma canonically determines a YB-solution, whose unitarity and involutivity are characterized by corresponding properties of the underlying magmas as follows:

\begin{theoremN}
  Let $(X, \, \cdot, \, \ast)$ be a P{\l}onka bi-magma. Then:

  \begin{enumerate}[(1)]
  \item The associated canonical map defined for any $x$, $y\in X$ by:
    \begin{equation}\label{eq:solgenK}
      R = R_{(\cdot, \, \ast)} : X\times X \to X\times X, \qquad R (x, \, y) = (x\cdot y, \, x\ast y)
    \end{equation}
    is a YB-solution.

  \item $R$ is unitary if and only if the P{\l}onka bi-magma $(X, \, \cdot, \, \ast)$ is unitary.

  \item $R$ is involutive if and only if $(X, \, \cdot)$ is a right involutory magma and $(X, \, \ast)$ is a left involutory magma.
  
  \item $R$ is (left-right) non-degenerate \cite[Definition 1.1]{etingof} if and only if $X = \{ \star \}$ is a singleton set.
  \end{enumerate}
\end{theoremN}
Building on Theorem \ref{th:structRPm}, the structure of P{\l}onka bi-magmas is established in Theorem \ref{th:strpbimag} by showing that any P{\l}onka bi-magma can be decomposed as a partition of Lyubashenko's sub-bi-magmas as defined in Example \ref{exs:bmlyb}(\ref{item:LB}). The results regarding the uniqueness of the above decomposition of a P{\l}onka bi-magma as well as their classification are similar to those for right P{\l}onka magmas and are given in Theorems \ref{th:biuniqpart} and \ref{th:biisopart}. The aforementioned structural results are then used to study various types of YB-solutions induced by P{\l}onka bi-magmas.
For instance, a sample of our results concerning ideal-simple YB-solutions can be briefly stated as follows:

\begin{theoremN}
  Let $(X, \, \cdot, \, \ast)$ be a P{\l}onka bi-magma and consider $R = R_{(\cdot, \, \ast)}$ to be the associated canonical YB-solution. Then:

  \begin{enumerate}[(1)]
    \item $R$ is ideal-simple if and only if there exists an incompressible pair $\{f, \, g\}$ of commuting maps $f$, $g: X\to X$ such that $R = R_{(f, \, g)}$, i.e. $R (x, \, y) = (f(x), \, g(y))$, for all $x$, $y\in X$.
  \item  The set of isomorphism classes of all ideal-simple YB-solutions $R$ on $X$ is parameterized by the set of all triples  $(m,\ n,\ d)\in \bZ_{\ge 0}\times \bZ_{>0}^2$, where $d\le |\bZ/m|$.

  \item\label{sim100} If $X$ is a finite set with $|X| = t$, then the set of isomorphism classes of all ideal-simple YB-solutions $R$ on $X$ is $\sigma (t)$, the sum of all divisors of $t$.

  \end{enumerate}
\end{theoremN}
On the other hand, for the class of bi-connected YB-solutions we prove the following:
\begin{theoremN}
  Let $(X, \, \cdot, \, \ast)$ be a P{\l}onka bi-magma and consider $R = R_{(\cdot, \, \ast)}$ to be the associated canonical YB-solution. Then:

  \begin{enumerate}[(1)]
  \item\label{harary2} $R$ is bi-connected if and only if there exists a connected pair $(f, \, g)$ of two commuting maps
    $f$, $g: X \to X$ such that $R = R_{(f, \, g)}$.
\item In particular, if $\ast = \cdot^{op}$, then $R = R_{(\cdot, \, \cdot_{\rm op})}$ is bi-connected if and only if there exists a connected map $f: X \to X$ such that $R(x, \, y) = (f(x), \, f(y))$, for all $x$, $y\in X$.
Moreover, the number of isomorphism classes of bi-connected solutions of this form associated to right P{\l}onka magmas of order $n$, is
the Harary number $\mathfrak{c} (n)$ \cite[Definition 3.6]{acm1}.
     \end{enumerate}
\end{theoremN}

\subsection*{Acknowledgements}

The authors are deeply grateful to Philip Ture\v{c}ek and Vladimir Antofi for their comments included in Example \ref{exs:exerkm}(\ref{PTVA1}) and Example \ref{exs:exerkm}(\ref{PTVA2}) and to C.N. Beli, A. Pilitowska and D. Stanovsk\'{y} for assorted comments and pointers to the literature. A.C. is partially supported by NSF grant DMS-2001128.


\section{Preliminaries}\label{se:prel}
For a given set $X$, we denote by $\pi_1$, $\pi_2 \colon X\times X \to X$ the canonical projections $\pi_1 (x, y) = x$, $\pi_2 (x, y) = y$, for all $x$, $y\in X$, and by ${\rm Id}_X$ the identity map on $X$. Furthermore, the group of permutations on $X$ will be denoted by $\Sigma_X$ or by $S_n$ if $X$ is a finite set with $n$ elements. \\
\noindent\textbf{Universal algebras, magmas and bi-magmas.} For the basic concepts of universal algebras we refer to
\cite{adamek_ros, bruck, buris}. We only recall briefly the concepts that will be used throughout the paper. A \emph{universal algebra} (or an \emph{algebra}, for short) is a pair $A = (A, \, (\mu_i)_{i\in I})$,
where $A$ is a set and $\mu_i : A^{n_i} \to A$ is a family of operations on $A$, for all $i\in I$; for any $i\in I$, the integer $n_i \geq 0$ is called the \emph{arity} of $\mu_i$ and $\mu_i$ is called an \emph{$n_i$-ary operation}. Two algebras $A$ and $B$ whose operations are indexed by the same set $I$, with corresponding operations having
the same arity, are said to be of the \emph{same signature (type)}.
A subcategory $\mathcal{V}$ of the category of all universal algebras of a given signature, satisfying a specified family of identities, is called a \emph{variety of algebras}. Basic examples are the variety of all lattices, (semi)groups, rings, the variety of all $R$-modules over a
ring $R$, etc. 

All universal algebras used in this paper will have at most two binary operations, with the principal examples being magmas and bi-magmas, which we recall below. A \emph{magma} (\cite[Definition 1.1, Chapter IV]{serre} or \cite[Definition 1]{bourbaki}) is a pair $(X, \, \cdot)$ consisting of a set $X$ with a binary operation $\cdot : X \times X \to X$, $(x, \, y) \mapsto x\cdot y$, called \emph{multiplication}. A magma $(X, \, \cdot)$ is called:
\begin{enumerate}[(1)]
\item \emph{total} if $X = X^2 := \{x\cdot y \, | \, x, y \in X \}$;
\item \emph{idempotent} (or a \emph{band}) if $x^2 = x$, for all $x \in X$;
\item \emph{left} (resp. \emph{right}) \emph{cancellative} if for any $a \in X$ the left (resp. right) translation map $l_a : X \to X$, $l_a (x) := a \cdot x$ (resp. $r_a : X \to X$, $r_a (x) := x \cdot a$) is injective;
\item \emph{right involutory} (resp. \emph{left involutory}), also called \emph{$2$-cyclic law} in \cite{romanowska}, if all right (resp. left) translations are involutions on $X$, i.e. $r_y^2 = {\rm Id}_X$ (resp. $l_y^2 = {\rm Id}_X$) for all $y\in X$. Explicitly, $(X, \, \cdot)$ is right involutory if for any $x$, $y\in X$ we have:
\begin{equation}\label{eq:remaguni}
(x \cdot y) \cdot y = x.
\end{equation}
\item a \emph{left} (resp. \emph{right}) \emph{quasi-group} \cite{PPZ} if $l_a : X \to X$ (resp. $r_a : X \to X$) is bijective, for all $a \in X$;
\item a \emph{quasi-group} if it is simultaneously a left and right quasi-group;
\item a \emph{left zero} (resp. \emph{right zero}) \emph{semigroup} if it has an associative multiplication given by: $x \cdot y : = x$ (resp. $x \cdot y : = y$), for all $x$, $y\in X$.
\end{enumerate}

\begin{examples}
\begin{enumerate}[(1)]
\item The basic example of a non-associative total idempotent magma on the set $\bR$ is the mean operation: $x \cdot y := 2^{-1} (x+y)$, for all $x$, $y\in \bR$;
\item If $G$ has a group structure denoted by juxtaposition and we define $x\cdot y := y x^{-1} y$, for all $x$, $y\in G$, then $(G, \cdot)$ is a right involutory magma.
\end{enumerate}
\end{examples}
A \emph{morphism of magmas} $f: (X, \, \cdot) \to (X', \, \cdot')$ is a function $f: X \to X'$ such that $f(x \cdot y) = f(x) \cdot' f(y)$, for all $x$, $y\in X$. We denote by ${\rm Mag}$ the category of all magmas.\\
A \emph{congruence} \cite[Definition 11, p. 11]{bourbaki} on a magma $(X, \, \cdot)$ is an equivalence relation $\approx$ on the set $X$ compatible with the multiplication: i.e. $x \approx x'$ and $y \approx y'$ imply $x \cdot y \approx x' \cdot y'$. If $\approx$ is a congruence on a magma $(X, \, \cdot)$, then the \emph{quotient magma} is the set $X/\approx$ with the multiplication:
\begin{equation}\label{eq:magfactor}
 \overline{x} \cdot \overline{y} := \overline{x\cdot y}
\end{equation}
for all $\overline{x}$, $\overline{y} \in X/\approx$. The typical example of a congruence is the kernel of a morphism of magmas: if
$f: X \to Y$ is a morphism of magmas, then
\begin{equation*}
{\rm Ker} (f) := \{ (x, \, y) \in X \times X \, | \, f(x) = f(y) \}
\end{equation*}
is a congruence on $X$ and there exists an isomorphism of magmas $X/{\rm Ker} (f)  \cong {\rm Im} (f)$.\\
Recall \cite[page 54]{sim_mag} that a non-empty subset $I \subseteq X$ of a magma $(X, \, \cdot)$ is called a \emph{right} (resp. \emph{left}) \emph{ideal} if $I \cdot X \subseteq I$ (resp. $X \cdot I \subseteq I$) and a \emph{two-sided ideal} if it is both a left and right ideal of $(X, \, \cdot)$. Of course, $X^2 = \{ x\cdot y \, | \, x, \, y\in X\}$ is a two-sided ideal of any
magma $(X, \, \cdot)$.\\
Furthermore, a magma $(X, \cdot)$ is called \emph{right} (resp. \emph{left}) \emph{ideal-simple} if it has no proper non-empty right (resp. left) ideals, and \emph{ideal-simple} if it has no proper non-empty two-sided ideals.


\begin{remark}\label{re:idealsem}
The concept of left/right or two-sided ideal for magmas as defined above, which will be used throughout the paper, is the magma counterpart of
the same term used for semigroups (see, for instance, \cite[p. 26]{Grillet}). Furthermore, the term \emph{ideal}-simple is used to emphasize that the notion refers specifically to the absence of proper non-empty two-sided ideals, distinguishing it from the distinct concept of \emph{congruence}-simple for magmas (see \cite[page 54]{sim_mag}).

It is worth noting that in the theory of universal algebras, the concept of an ideal for algebras of higher signature is considerably more intricate. It was introduced by Ursini and only for universal algebras which have a constant (zero) $0$.
For a detailed discussion we refer to \cite[Definition 1.1]{gumm}.
\end{remark}
A \emph{bi-magma} is a triple $(X, \, \cdot, \, \ast)$ consisting of a
set $X$ with two binary operations $\cdot : X \times X \to X$ and $\ast : X \times X \to X$. Notions of ideal, sub-bi-magma (subalgebra) or quotient bi-magma
are defined in the obvious fashion by extending the ones above associated with a magma: we mention that for bi-magmas the congruences are equivalence relations
on $X$ compatible with both multiplications. In order to distinguish between the various types of ideals, we use left/right adjectives for each of the two operations. For example, a \emph{right-left ideal} of a bi-magma $(X, \, \cdot, \, \ast)$ is a non-empty subset $I \subseteq X$ that is simultaneously a right ideal of $(X, \, \cdot)$ and a left ideal of $(X, \, \ast)$.\\
A \emph{morphism of bi-magmas} $f: (X, \, \cdot, \, \ast) \to (X', \, \cdot', \, \ast')$ is a function $f: X \to X'$ such that $f(x \cdot y) = f(x) \cdot' f(y)$ and
$f(x \ast y) = f(x) \ast' f(y)$, for all $x$, $y\in X$. We denote by ${\rm BiMag}$ the category of all bi-magmas: it is a variety
of universal algebras (empty family of identities) whose objects are sets equipped with two $2$-ary operations.\\
A word of caution regarding terminology is in order: in earlier literature on universal algebras, the terms \emph{groupoid} \cite{bruck, buris} and \emph{$2$-groupoid} \cite{rusii} were used to denote what are now called magmas and bi-magmas, respectively. We avoid this older terminology because the terms groupoid and $2$-groupoid have since acquired different meaning in category theory. For example, nowadays a groupoid typically refers to a category in which every morphism is an isomorphism. Accordingly, we adopt the terminology introduced by Serre \cite[Part I, \S IV.1, Definition 1.1]{serre} and subsequently used by Bourbaki  \cite[\S I.1.1, Definition 1]{bourbaki}.\\
Throughout, rather than considering maps $R : X\times X \to X \times X$ we will often work with bi-magmas $(X, \, \cdot, \, \ast)$ in which case we implicitly assume that $R = R_{(\cdot, \, \ast)}$ is given for any $x$, $y\in X$ by:
\begin{equation}\label{notatieR}
R(x, \, y) = R_{(\cdot, \, \ast)} \, (x, \, y) = (x\cdot y, \, x \ast y) = \bigl(l_x(y), \, r_y(x) \bigl)
\end{equation}
and refer to it as the \emph{canonical map} associated with the bi-magma $(X, \, \cdot, \, \ast)$. The framework we propose has a number of advantages. On the one hand, it allows us to define, for functions $R: X\times X \to X\times X$, analogues of concepts from universal algebra by translating their bi-magma counterparts into the language of set-theoretic maps $R: X\times X \to X\times X$ (see Section \ref{se:sect2a} for details). On the other hand, for well known concepts that already exist in the Yang-Baxter literature, the language of bi-magmas provides a new perspective for their study and clarifies their relationships with notions that have long been studied in universal algebra. We provide an example below.

\begin{remark}\label{re:nede-quasigr}
The concept of a \emph{non-degenerate} map was introduced in \cite[Definition 1.1]{etingof} in the following way: a function $R \colon X \times X \to X\times X$ written as $R(x, \, y) = \bigl(l_x(y), \, r_y(x) \bigl) = (x\cdot y, \, x \ast y)$ is called \emph{non-degenerate}
    if the left translations $l_x$ and right translations $r_x$ are bijections, for all $x\in X$. This is equivalent to saying that the bi-magma $(X, \, \cdot, \, \ast)$ associated to $R$ is a left-right quasi-group: i.e. $(X, \, \cdot)$ is a left quasi-group and $(X, \, \ast)$ is a right quasi-group. We will call such solutions \emph{left-right non-degenerate}. The adjectives \emph{left} and \emph{right} by which we amplify \cite[Definition 1.1]{etingof} are useful bookkeeping tools: a composition by a flip
    \begin{equation*}
      X\times X\ni (x,y)\xmapsto{\quad\tau_{X}\quad}(y,x)\in X\times X
    \end{equation*}
    on either side turns \emph{left-right} non-degeneracy into its {\it right-left} counterpart. As braid solutions $R$ are in bijection with YB-solutions $\tau_{X}\circ R$, the (left-right) non-degeneracy of \cite[Definition 1.1]{etingof}, a paper focusing on braid solutions, translates to the {\it right-left} non-degeneracy of YB-solutions.

Moreover, we point out that several distinct notions of non-degeneracy can be considered. In the equivalent language of bi-magmas $(X, \, \cdot, \, \ast)$ and quasi-groups, these correspond to the left-right, right-left, left-left, right-right, and two-sided cases, where in the latter case both $(X, \, \cdot)$ and $(X, \, \ast)$ are quasi-groups. Apart from these, there are other possible variants when only one of the magmas is a (left, right or two-sided) quasi-group.  It is therefore important to distinguish carefully between these different notions of non-degeneracy. In this regard, the language of quasi-groups provides an unambiguous framework.
\end{remark}
In addition to (right-left) non-degenerate solutions, several other classes of YB-solutions have received particular attention in the literature. For instance, a solution $R \colon X \times X \to X\times X$ is called \emph{involutive} (resp. \emph{unitary}) if $R^2 = {\rm Id}_{X\times X}$ (resp. $R^{21}R = {\rm Id}_{X\times X}$, where $R^{21} := \tau_X R \tau_X$). 

Let $\mathcal{YB}$ be the category of solutions of the Yang-Baxter equation, i.e. objects of $\mathcal{YB}$ are pairs $(X, \, R)$, where $X$ is a set and $R$ a YB-solution on $X$. A morphism $\sigma: (X, \, R) \to (X', \, R')$ in $\mathcal{YB}$ is a map $\sigma: X \to X'$ such that $(\sigma \times \sigma) \circ R = R' \circ (\sigma \times \sigma)$. This is equivalent to $\sigma: X \to X'$ being a morphism between the associated bi-magmas: i.e. $\sigma (x \cdot y) = \sigma(x) \cdot' \sigma(y)$ and $\sigma (x \ast y) = \sigma(x) \ast' \sigma(y)$, for all $x$, $y\in X$. Two solutions $(X, \, R)$ and $(X', \, R') \in \mathcal{YB}$ are called \emph{isomorphic} if they are isomorphic as objects in the category $\mathcal{YB}$. For a given YB-solution $(X, \, R) \in \mathcal{YB}$ we denote by ${\rm Aut} \, (X, \, R)$ its automorphism group in the category $\mathcal{YB}$.



\begin{definition}\label{def:ybbimagm}
A bi-magma $(X, \, \cdot, \, \ast)$ satisfying the compatibilities \eqref{YB1a}-\eqref{YB3a} will be called a \emph{Yang-Baxter bi-magma} (or a \emph{set-theoretic non-associative Yang-Baxter algebra}) and we denote by ${}_{\rm YB}{\rm BiMag}$ the full subcategory of ${\rm BiMag}$ consisting of all Yang-Baxter bi-magmas.
\end{definition}
The category ${}_{\rm YB}{\rm BiMag}$ is a variety of algebras in the sense of universal algebras and the functor
\begin{equation*}
  {}_{\rm YB}{\rm BiMag} \to \mathcal{YB}, \qquad (X, \, \cdot, \, \ast) \mapsto (X, \, R_{(\cdot, \, \ast)})
\end{equation*}
is of course an isomorphism of categories.

\begin{examples}\label{exs:bmlyb}
  \begin{enumerate}[(1)]
\item\label{item:LB}  The basic example of a YB-solution, attributed to Lyubashenko in \cite[Example 1]{dr}, is the following: for two self-maps $f$, $g: X\to X$
  the function defined for any $x$, $y\in X$ by
  \begin{equation}\label{eq:lysol}
    R = R_{(f, \, g)} : X\times X \to X\times X, \qquad R (x, \, y) := \bigl(f(x), \, g(y)\bigl)
\end{equation}
is a solution of the Yang-Baxter equation if and only if $f\circ g = g\circ f$. The map $R_{(f, \, g)}$ given by \eqref{eq:lysol} is called a \emph{Lyubashenko solution} on $X$.  The bi-magma on $X$, associated to $R_{(f, \, g)}$, will be denoted by $X_f^g = (X, \, \cdot_f, \, \ast^g)$, where $x\cdot_f \, y := f(x)$ and $x\ast^g y \, := g(y)$, for all $x$, $y\in X$ and we called it the \emph{Lyubashenko bi-magma} of $f$ and $g$.

\item \label{bbirackb} A \emph{birack} in the sense of \cite[Lemma 1.2]{Deho} is just a Yang-Baxter bi-magma $(X, \, \cdot, \, \ast)$ such that $(X, \, \cdot)$ is a left quasi-group and $(X, \, \ast)$ is a right quasi-group.
\end{enumerate}
\end{examples}




\section{Combinatorial objects: ideal-simple and incompressible maps}\label{se:sect2a}

The isomorphism of categories ${}_{\rm YB}{\rm BiMag} \cong \mathcal{YB}$ established at the end of Section \ref{se:prel} allows us to introduce counterparts of the standard notions from the theory of universal algebras (ideals, (semi)simple, noetherian or artinian objects, and so on) for the category of YB-solutions.
In particular, the notions of an ideal and of a simple object in the category $\mathcal{YB}$ of all YB-solutions are purely combinatorial in nature.
As an application, we explicitly classify all ideal-simple Lyubashenko YB-solutions as given in Example \ref{exs:bmlyb}(\ref{item:LB}):
they are "almost" finite and, somewhat surprisingly, are classified by pairs of functions formed by a cycle and a function
which is known in ergodic theory as an \emph{odometer} (or \emph{adding machine map}) \cite[p.266]{np_ergodic}.
To begin with, we introduce:

\begin{definition}\label{def:solsimple}
Let $X$ be a set and $R \colon X \times X \to X\times X$ a function. A non-empty subset $I \subseteq X$ is called an \emph{ideal} of $R$ if
\begin{equation}\label{solsimpleb}
R (I \times X) \subseteq I \times X, \qquad R (X \times I) \subseteq X \times I.
\end{equation}
$R$ is called an \emph{ideal-simple} function if $X$ is the only ideal of $R$. A solution of the Yang-Baxter equation $(X, \, R)\in \mathcal{YB}$ is called an \emph{ideal-simple} YB-solution if $R$ is ideal-simple in the above sense.
\end{definition}

\begin{remarks}\label{re:simplepov}
  \begin{enumerate}[(1)]

  \item\label{item:draci} $I$ is an ideal of $R\colon X \times X \to X\times X$ in the sense of Definition \ref{def:solsimple} if and only if $I$ is a right-left ideal in the associated canonical bi-magma $(X, \, \cdot, \, \ast)$ of $R$: i.e. $I$ is a right ideal of $(X, \, \cdot)$ and a left ideal of $(X, \, \ast)$. In fact, using the flip map, one can define (and we leave it to the reader) all possible versions of ideals for a map $R\colon X \times X \to X\times X$: left-right, left-left, etc. or the one of two-sided ideals.

    In this paper, we are only interested in ideals and ideal-simple YB-solutions in the sense of Definition \ref{def:solsimple}, as they appear to be the natural concepts in the category $\mathcal{YB}$.


  \item If we consider the braid equation, then the flip version of Definition \ref{def:solsimple} has to be considered. More precisely, a non-empty subset $I \subseteq X$ is called a \emph{flip ideal} of $R\colon X \times X \to X\times X$ if
    \begin{equation}\label{solsimpleb}
      R (I \times X) \subseteq X \times I, \qquad R (X \times I) \subseteq I \times X.
    \end{equation}
    Of course, $I$ is an ideal of $R\colon X \times X \to X\times X$ if and only if $I$ is a flip ideal of $\tau_X \circ R$. 

Another definition of a \emph{simple solution} of the braid equation, following a different viewpoint, appears in \cite[Definition 3.3]{Cedo2}.

  \item\label{noetherian} In the same fashion, we can also introduce various other classes of YB-solutions, which may be of some intrinsic interest. For instance, a YB-solution $(X, \, R)$ can be called \emph{noetherian} (resp. \emph{artinian}) if it satisfies the ACC (resp. DCC) condition on ideals.

  \end{enumerate}
\end{remarks}

\begin{examples}\label{exs:exsolsimple}
  \begin{enumerate}[(1)]
  \item The flip map $\tau_X \colon X \times X \to X\times X$, $\tau_X (x, \, y) = (y, \, x)$ is an ideal-simple YB-solution. In contrast, any non-empty subset of $X$ is an ideal of the identity map ${\rm Id}_{X^2}$.

  \item More generally, every YB-solution of the form $(x,y)\xmapsto{}(g(y), f(x))$ with at least one of the functions $f,g:X\to X$ surjective is ideal-simple.

  \item\label{primideal} Let $f \colon X\to X$ be a function and $R = R_f \colon X \times X \to X\times X$, $R (x, \, y) = (f(x), f(y))$, for all $x$, $y\in X$. Then, $I$ is an ideal of $R_f$ if and only if $I$ is $f$-invariant, i.e. $f(I) \subseteq I$. Thus, $R_f$ is ideal-simple if and only if $X$ is the only $f$-invariant subset of $X$. These maps are classified in Lemma \ref{le:cycles} below.

  \item\label{doiideal} More generally, let $f$, $g: X\to X$ be two commuting self-maps and $R = R_{(f, \, g)} : X\times X \to X\times X$ the associated Lyubashenko YB-solution given by Example \ref{exs:bmlyb}(\ref{item:LB}), i.e. $R (x, \, y) := \bigl(f(x), \, g(y)\bigl)$. Then, $I$ is an ideal of $R_{(f, \, g)}$ if and only if $I$ is both $f$ and $g$ invariant, i.e. $f(I) \subseteq I$ and $g(I) \subseteq I$. It follows that $R_{(f, \, g)}$ is an ideal-simple YB-solution if and only if the pair $\{f, \, g\}$ is incompressible in the sense of Definition \ref{def:simple} below.
  \end{enumerate}
\end{examples}
Motivated by Example \ref{exs:exsolsimple}(\ref{doiideal}), we introduce the following notions meant to generalize cycles of length $|X|$ in the permutation group $\Sigma_X$:

\begin{definition}\label{def:simple}
Let $X$ be a set and $\cF$ a family of functions $X\to X$. Then:
  \begin{enumerate}[(1)]

  \item\label{item:compres} A subset $I\subseteq X$ {\it compresses $\cF$} if $f(I)\subseteq I$, for all $f\in \cF$.

  \item\label{item:incomprdef} $\cF$ is called {\it incompressible} if no non-empty proper subset of $X$ compresses $\cF$ in the above sense.

   \item\label{item:cycledef} An incompressible singleton $\{f: X \to X \}$ is called an {\it incompressible map} (or an {\it $|X|$-cycle}).
   \end{enumerate}
\end{definition}

The terminology of $|X|$-cycle map is justified by the following:

\begin{lemma}\label{le:cycles}
  A function $f: X \to X$ is incompressible precisely when it is isomorphic to the translation by a generator of a finite cyclic group.

  In particular, $R_f: X\times X \to X\times X$, $R_f (x, \, y) = (f(x), f(y))$ is an ideal-simple YB-solution if and only if $X$ is a finite set and $f$ is a cycle of length $|X|$ in the permutation group $\Sigma_X$.
\end{lemma}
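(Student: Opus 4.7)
The plan is to prove both directions of the characterization of incompressible self-maps, and then deduce the statement about $R_f$ from \Cref{ex:exsolsimple}\Cref{primideal}.

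For the forward implication, take $X = \bZ/n$ with $f(x) = x + g$ for some generator $g \in \bZ/n$. Any non-empty $I \subseteq X$ with $f(I) \subseteq I$ is, by bijectivity of $f$ on a finite set, actually $f$-stable, and so a union of $\langle g \rangle$-orbits; since $g$ generates, there is a unique such orbit, namely all of $X$, forcing $I = X$.

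For the converse, assume $\{f\}$ is incompressible and fix $x_0 \in X$. The forward orbit $\{f^k(x_0) : k \geq 0\}$ is non-empty and $f$-invariant, so it equals $X$ by incompressibility. If the iterates $f^k(x_0)$ were pairwise distinct, then $\{f^k(x_0) : k \geq 1\} = X \setminus \{x_0\}$ would be a non-empty, proper, $f$-invariant subset, a contradiction. Hence there exist $0 \leq m < n$ with $f^n(x_0) = f^m(x_0)$; pick the pair with $n$ minimal. If $m \geq 1$, then $\{f(x_0), \ldots, f^{n-1}(x_0)\}$ is non-empty, proper (it omits $x_0$ by minimality of $n$) and $f$-invariant, another contradiction. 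Therefore $m = 0$, so $f^n(x_0) = x_0$, and the bijection $\bZ/n \to X$, $k \mapsto f^k(x_0)$, conjugates translation by the generator $1$ into $f$.

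The ``in particular'' clause follows at once: \Cref{ex:exsolsimple}\Cref{primideal} identifies simplicity of $R_f$ with incompressibility of $f$, and the characterization just proved says this is equivalent to $X$ being finite with $f$ a cycle of length $|X|$ in $\Sigma_X$. The only delicate point in the whole argument is the ``tadpole'' case, in which the iteration of $f$ enters a cycle through a pre-periodic tail; this is precisely what the minimality-of-$n$ argument in the second paragraph rules out.
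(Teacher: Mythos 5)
Your proof is correct and follows essentially the same route as the paper's: both directions hinge on the observation that forward orbits are $f$-invariant and hence, by incompressibility, equal to all of $X$. The only difference is cosmetic: where you exclude a pre-periodic tail via a minimality argument, the paper applies the same orbit observation to $f(x)$ as well, so that $x$ lies in the forward orbit of $f(x)$ and $f^{k+1}(x)=x$ follows at once.
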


\begin{proof}
  That such translations meet the requirement of Definition \ref{def:simple}(\ref{item:cycledef}) is obvious, so we address the converse.

  Note that for every $x\in X$ the corresponding {\it forward orbit}
  \begin{equation}\label{eq:fororb}
    \cO_{x,+}:= \{x,\ f(x),\ f^2(x),\ \cdots\}
  \end{equation}
  satisfies $f(\cO_{x,+})\subseteq \cO_{x,+}$, so that $\cO_{x,+}=X$. This is true of any element of $X$ whatsoever, including $f(x)$. But then $x\in X$ is in the forward orbit of $f(x)$, so that $\cO_{x,+}$ is a finite cycle under $f$.
\end{proof}
It will also be convenient to observe that all members of an incompressible commuting family of functions are necessarily bijective.

\begin{proposition}\label{pr:allarebij}
  If $\cF$ is an incompressible family of mutually commuting functions on a set $X$, every $f\in \cF$ is a bijection $X\to X$. Moreover, the sub-monoid of
  \begin{equation*}
    X^X:=\left\{\text{functions }X\to X\right\}
  \end{equation*}
  generated by $\cF$ is a group.
\end{proposition}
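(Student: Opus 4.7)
My proposed proof proceeds in three movements: (i) establish that every $f\in \cF$ is surjective, (ii) identify the sub-monoid $M\subseteq X^X$ generated by $\cF$ with $X$ itself via an orbit map, and (iii) upgrade surjectivity to invertibility to exhibit $M$ as a group.

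For (i), the key observation is that for any $f\in \cF$ the image $f(X)$ is non-empty and, by the pairwise commutativity hypothesis, stable under every $g\in \cF$, since
\begin{equation*}
g\bigl(f(X)\bigr)=f\bigl(g(X)\bigr)\subseteq f(X).
\end{equation*}
So $f(X)$ compresses $\cF$, and incompressibility forces $f(X)=X$. Since compositions of surjections are surjective, every element of $M$ acts on $X$ by a surjective self-map; moreover $M$ is commutative, as it is generated by pairwise commuting elements.

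For (ii), I would fix any $x_0\in X$ and study the orbit map $\phi\colon M\to X$, $\phi(m):=m(x_0)$. The orbit $Mx_0$ is non-empty and $\cF$-stable (for $g\in \cF$, $g(m(x_0))=(gm)(x_0)\in Mx_0$), so by incompressibility $Mx_0=X$, making $\phi$ surjective. It is also injective: if $m(x_0)=m'(x_0)$, then for any $y=n(x_0)\in X$ the commutativity of $M$ gives
\begin{equation*}
m(y)=mn(x_0)=nm(x_0)=nm'(x_0)=m'(y),
\end{equation*}
so $m=m'$ as self-maps of $X$. Hence $\phi$ is a bijection.

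For (iii), transporting the composition law of $M$ across $\phi$ endows $X$ with a commutative monoid structure (identity $x_0$) in which each $m\in M$ acts on $X$ as translation by $\phi(m)=m(x_0)$. Because $m$ itself is a surjective self-map of $X$, this translation is a surjection on the monoid $X$; in particular there exists $a\in X$ with $\phi(m)\cdot a=x_0$, so $\phi(m)$ is invertible (a right inverse in a commutative monoid is a two-sided inverse). Since $\phi$ is a bijection, \emph{every} element of $X$ is invertible, whence $X$—and therefore $M$—is an abelian group. In particular, every $f\in \cF\subseteq M$ is a bijection $X\to X$.

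The main obstacle I anticipate is the injectivity of $\phi$ in step (ii): this is what lets us transport the group-theoretic content from the abstract orbit set $X$ back to the concrete monoid $M$ of self-maps, and without it one cannot conclude that the surjective elements of $M$ admit inverses in $X^X$. Once $\phi$ is established as a bijection, the remaining bookkeeping is routine.
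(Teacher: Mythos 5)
Your proof is correct, and it takes a genuinely different route from the paper's. The paper argues directly: for $y=f(x)$, the forward orbit of $y$ under the generated monoid $M$ is $\cF$-invariant, hence equals $X$ by incompressibility, so some $h\in M$ satisfies $h(y)=x$; then $F:=fh$ fixes $y$, and since the fixed-point set of any map commuting with $\cF$ is itself $\cF$-invariant, incompressibility forces $F=\id_X$, whence $h=f^{-1}$ by commutativity --- producing the inverse of $f$ explicitly as a word in $\cF$. You instead first prove surjectivity of each $f$ via the $\cF$-invariance of $f(X)$ (a step the paper never makes explicit), then show the orbit map $\phi\colon M\to X$ at a base point is a bijection, and transport the monoid structure across $\phi$ to conclude. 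Your injectivity argument for $\phi$ (``two elements of $M$ agreeing at one point agree everywhere'') is the same commutation trick as the paper's fixed-point lemma, just phrased for a pair of maps rather than for a single map against $\id_X$. What your route buys is that it establishes, inside the proof of the proposition, the free and transitive action of $M$ on $X$ --- which is precisely the content that the paper defers to \Cref{th:allcomm} and there deduces \emph{from} this proposition; so your argument front-loads the later theorem at the cost of being longer, while the paper's is shorter and more economical for the statement actually at hand. There is no circularity in your version, since you never invoke the proposition itself.
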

\begin{proof}

  Consider some $x\in X$ and $y := f(x)$. The full forward orbit
  \begin{equation*}
    \cO_{y,+}^{\cF}:=\left\{h(y) \ |\ h\in\text{ monoid $\subseteq X^X$ generated by }\cF\right\}
  \end{equation*}
  under $\cF$ is again $\cF$-invariant, and incompressibility shows that $\cO_{y,+}^{\cF}=X$. In particular, that set contains $x$, so that
  \begin{equation*}
    h(y) = x\text{ for some }h = f_1\cdots f_n,\quad f_i\in \cF.
  \end{equation*}
  But then $F:=ff_1\cdots f_n$ fixes $y$; because $F$ also commutes with every $h\in \cF$, its fixed points constitute an $\cF$-invariant subset of $X$. Once more, incompressibility implies that $F=\id_X$. But because $f$ and the $f_i$ commute, we have
  \begin{equation*}
    f_1\cdots f_n = f^{-1}.
  \end{equation*}
  This proves both claims: an arbitrary $f\in \cF$ is bijective, and its inverse is contained in the monoid generated by $\cF$.
\end{proof}
The preceding discussion affords a classification of sorts for commuting incompressible families. For this purpose, we say that two families $\cF = (f_i)_{i\in I}$ and $\cF' = (g_i)_{i\in I}$ of functions $X\to X$ are
\emph{isomorphic} if there exists a permutation $\sigma \in \Sigma_X$ on $X$ such that $\sigma \circ f_i = g_i \circ \sigma$, for all $i\in I$.

\begin{theorem}\label{th:allcomm}
  \begin{enumerate}[(1)]
  \item\label{item:isgp} The commuting incompressible families
    \begin{equation*}
      \cF=(f_i)_{i\in I}\subseteq X^X
    \end{equation*}
    on a set $X$ consist, up to isomorphism, of the abelian group structures on $X$ together with a choice of an $I$-indexed tuple of elements thereof which generate the group as a monoid.

  \item\label{item:getgp} The group in part (\ref{item:isgp}) can be recovered as the automorphism group
    \begin{equation*}
      \Aut(\cF):=\{\sigma\in \Sigma_X\ |\ \sigma f = f\sigma,\ \forall f\in \cF\}
    \end{equation*}
    of the structure $(X,\cF)$.
  \end{enumerate}
\end{theorem}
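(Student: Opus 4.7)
The heart of the argument is to produce a free transitive action of a naturally occurring abelian group $G$ on $X$. By \Cref{pr:allarebij}, the sub-monoid $G\subseteq X^X$ generated by $\cF$ is a group of bijections of $X$; and since $\cF$ consists of mutually commuting elements, $G$ is abelian. Fix any basepoint $x_0\in X$. The orbit $G\cdot x_0$ is in particular $\cF$-invariant, so by incompressibility $G\cdot x_0 = X$; i.e.\ the action of $G$ on $X$ is transitive. To show it is also free, suppose $g(x_0)=x_0$ for some $g\in G$, and consider the fixed-point set
\begin{equation*}
  \mathrm{Fix}(g):=\{x\in X\ |\ g(x)=x\}.
\end{equation*}
This is non-empty (it contains $x_0$) and $\cF$-invariant (because $g$ commutes with every $f\in\cF$), so incompressibility gives $\mathrm{Fix}(g)=X$, i.e.\ $g=\id_X$. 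Hence the stabilizer of $x_0$ is trivial and the orbit map $G\ni g\mapsto g(x_0)\in X$ is a bijection.

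Transporting the group structure along this bijection makes $X$ into an abelian group with identity $x_0$, with each $f_i\in\cF$ corresponding to translation by $a_i:=f_i(x_0)$; the $a_i$'s generate $(X,+)$ as a monoid, since the $f_i$'s generate $G$ as a monoid by construction. Conversely, given any abelian group $X$ together with a tuple $(a_i)_{i\in I}$ generating $X$ as a monoid, the family $\cF:=(T_{a_i})_{i\in I}$ of translations obviously commutes; and a non-empty $\cF$-invariant subset $S\subseteq X$ is closed under adding any monoid combination of the $a_i$'s, hence equals $X$. This establishes the correspondence of part \Cref{item:isgp}, with compatibility under isomorphisms of the data on either side being routine.

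For part \Cref{item:getgp}, translations $T_a$ with $a\in X$ (equivalently, elements of $G$ acting on $X$) commute with every $f\in\cF$, so $G\subseteq \Aut(\cF)$. Conversely, let $\sigma\in\Aut(\cF)$; since $\sigma$ commutes with each element of $\cF$ and $G$ is generated by $\cF$ as a monoid, $\sigma$ commutes with every element of $G$. Setting $a:=\sigma(x_0)$ and writing an arbitrary $x\in X$ as $g(x_0)$ for a unique $g\in G$ (free transitivity), we get $\sigma(x)=\sigma g(x_0)=g\sigma(x_0)=g(a)=T_a(x)$, so $\sigma$ is the translation by $a$ and belongs to $G$.

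The step I expect to require the most care is the \emph{freeness} of the $G$-action: it is the only place where incompressibility is leveraged together with commutativity in a non-obvious way (via the $\cF$-invariance of $\mathrm{Fix}(g)$). Once freeness is in hand, everything else is bookkeeping: part \Cref{item:isgp} is a change-of-coordinates exercise, and part \Cref{item:getgp} is a standard computation using that any endomorphism commuting with a transitive group action is determined by its value at a single point.
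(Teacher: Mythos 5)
Your proposal is correct and follows essentially the same route as the paper: both deduce from \Cref{pr:allarebij} that the monoid generated by $\cF$ is an abelian group, establish freeness of its action via the $\cF$-invariance of fixed-point sets and transitivity via the $\cF$-invariance of orbits, and then identify $X$ with the group through a choice of basepoint; part (2) is likewise handled identically, by noting that any bijection commuting with $\cF$ commutes with the whole group and is therefore a translation. The only difference is that you spell out a few routine verifications (the converse direction and the explicit computation $\sigma(x)=g(a)$) that the paper leaves implicit.
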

\begin{proof}
  \begin{enumerate}[(1)]
  \item Two structures
    \begin{equation*}
      \left(\Gamma,\ (\gamma_i)_{i\in I}\right)
      \quad\text{and}\quad
      \left(\Gamma',\ (\gamma'_i)_{i\in I}\right)
    \end{equation*}
    as in the statement (group structures on $X$ together with tuples of generators) are considered isomorphic under the obvious circumstances: there is a group isomorphism $\Gamma\cong \Gamma'$ sending $\gamma_i$ respectively onto $\gamma_i'$. The statement is little more than a reformulation of Proposition \ref{pr:allarebij}, but we elaborate.

    We already know from Proposition \ref{pr:allarebij} that the monoid generated by the $f_i$ is a group $\Gamma$. Note that $\Gamma$ acts freely on $X$: the set of fixed points of any $\gamma\in \Gamma$ compresses $\cF$ in the sense of Definition \ref{def:simple}(\ref{item:compres}) so as soon as $\gamma$ has any fixed points at all it must be trivial.

    $\Gamma$ also acts transitively on $X$: every $\Gamma$-orbit compresses $\cF$, so $X$ must constitute a single orbit.

    We can thus identify $X\cong \Gamma$ (so that the action becomes translation) by choosing an element $x_0\in X$ to map to the identity $1\in\Gamma$. The different choices give isomorphic sets with $\cF$-action, and once we have fixed $x_0=1$ all other isomorphisms respect the group structure because they commute with translation by the (monoid) generators $f_i$ of $\Gamma$.

  \item For the last part, observe that an automorphism of $(X,\cF)$ is nothing but a bijection $X\to X$ commuting with the members of $\cF$; since those members generate $X$ as a group (even monoid), those bijections are precisely the translations by elements $x\in X$.
  \end{enumerate}
\end{proof}
Counting the isomorphism classes of finite commuting incompressible families on a given finite set $X$ is now a fairly simple matter:

\begin{corollary}\label{cor:howmany}
  Let $X$ be a finite set with $|X|=t$. The number of isomorphism classes of commuting incompressible families $\cF$ on $X$ with $|\cF|=k\ge 1$ is
  \begin{equation}\label{eq:divchain}
    \sum_{1\le d_1|d_2|\cdots|d_{k-1}|t} d_1\cdot d_2\cdot\ldots\cdot d_{k-1}.
  \end{equation}
\end{corollary}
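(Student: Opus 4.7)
The plan is to translate the enumeration problem into a count of the full-rank sublattices $L\subseteq\bZ^k$ of index $t$, and then evaluate that count using the Hermite normal form.

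As a first step, I would apply \Cref{th:allcomm} to identify iso classes of commuting incompressible $k$-families on $X$ with iso classes of pairs $(\Gamma,\,(\gamma_1,\ldots,\gamma_k))$, where $\Gamma$ is a finite abelian group of order $t$ and the $\gamma_i$ generate $\Gamma$. Giving such a $k$-tuple is the same as giving a surjective homomorphism $\phi\colon \bZ^k\twoheadrightarrow\Gamma$ (via $\gamma_i:=\phi(e_i)$), and two surjections $\phi,\phi'$ yield isomorphic data precisely when there is a group isomorphism $\alpha\colon\Gamma\xrightarrow{\cong}\Gamma'$ satisfying $\phi'=\alpha\circ\phi$. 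Because $\phi$ is surjective, such an $\alpha$ exists iff $\ker\phi=\ker\phi'$, so iso classes correspond bijectively to full-rank sublattices $L\subseteq\bZ^k$ of index $t$.

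Next, I would invoke the classical Hermite normal form: every full-rank sublattice of $\bZ^k$ admits a unique basis forming the columns of a lower-triangular matrix $(m_{ij})$ with diagonal entries $a_j:=m_{jj}>0$ and sub-diagonal entries satisfying $0\le m_{ij}<a_j$ for $i>j$. The index equals $a_1 a_2\cdots a_k$, and counting the free choices of sub-diagonal entries gives
\begin{equation*}
\#\bigl\{L\subseteq\bZ^k \mid [\bZ^k:L]=t\bigr\}\;=\;\sum_{\substack{a_1\cdots a_k=t\\ a_i\ge 1}}\;\prod_{j=1}^{k-1}a_j^{k-j}\;=\;\sum_{a_1\cdots a_k=t}a_1^{k-1}a_2^{k-2}\cdots a_{k-1}^1.
\end{equation*}

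To finish, I would execute the change of variables $d_i:=a_1 a_2\cdots a_i$ for $1\le i\le k-1$ (with the conventions $d_0:=1$ and $d_k:=t$). This is a bijection between factorizations $t=a_1\cdots a_k$ (with $a_i\ge 1$) and divisor chains $1\le d_1\mid d_2\mid\cdots\mid d_{k-1}\mid t$, and a short telescoping computation (easily verified by induction on $k$) yields $a_1^{k-1}a_2^{k-2}\cdots a_{k-1}^1=d_1 d_2\cdots d_{k-1}$, giving the claimed formula. The boundary $k=1$ reduces under the empty-product convention to the single class furnished by \Cref{le:cycles}. No step in this outline is genuinely delicate; the only place calling for care is the final exponent-telescoping, where one must verify that the HNF monomial flattens precisely to the flat product $d_1 d_2\cdots d_{k-1}$.
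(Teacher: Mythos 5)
Your argument is correct, but it reaches the formula by a genuinely different route than the paper's own proof. The paper proceeds by induction on $k$: writing $\sigma(k,t)$ for the divisor-chain sum, it checks the recursion $\sigma(k+1,t)=\sum_{d\mid t}d\cdot\sigma(k,d)$ and then shows the isomorphism-class counts $N(k,t)$ obey the same recursion, by splitting the data $\left(\Gamma,(\gamma_i)_{i=1}^{k+1}\right)$ of \Cref{th:allcomm} into the subgroup $\Gamma_{small}=\langle\gamma_1,\dots,\gamma_k\rangle$ of some order $d\mid t$, the cyclic quotient $\Gamma/\Gamma_{small}\cong\bZ/(t/d)$ generated by the image of $\gamma_{k+1}$, and the $d$ possible values of $\gamma_{k+1}^{t/d}$ in $\Gamma_{small}$. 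You instead start from the same appeal to \Cref{th:allcomm} (using tacitly that for a finite group, generating as a monoid and as a group coincide, so the tuples are exactly the surjections $\bZ^k\twoheadrightarrow\Gamma$), observe that isomorphism classes biject with full-rank sublattices $L\subseteq\bZ^k$ of index $t$ --- your remark that the intertwining isomorphism $\alpha$ exists iff $\ker\phi=\ker\phi'$ is exactly where surjectivity is used and is worth keeping explicit --- and then quote the classical Hermite-normal-form enumeration $\sum_{a_1\cdots a_k=t}a_1^{k-1}\cdots a_{k-1}^{1}$, which the substitution $d_i:=a_1\cdots a_i$ flattens to $d_1\cdots d_{k-1}$ since each $a_j$ occurs in exactly the $k-j$ factors $d_j,\dots,d_{k-1}$. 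Both proofs are complete; yours is arguably more conceptual, identifying the divisor-chain sum as the standard sublattice-counting function of $\bZ^k$ rather than as the solution of an ad hoc recurrence, at the modest cost of importing the existence and uniqueness of the Hermite normal form, which the paper's self-contained induction avoids.
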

\begin{proof}
  The proof proceeds by induction on $k$. For the base case $k=1$ there is only one chain of divisors as in \eqref{eq:divchain}, namely the empty chain (since we are counting {\it up} from $1$ to $k-1=0$). But then the corresponding empty-product term in \eqref{eq:divchain} is 1 (as all empty numerical products are). This equates \eqref{eq:divchain} to 1, hence the conclusion: by Lemma \ref{le:cycles}, the only incompressible single function on $X$ is the length-$t$ cycle.

  Assume, next, that the claim has been proven for $|\cF|<k$ (for some $k\ge 2$ fixed throughout the rest of the proof). Following \cite[equation (2), p.45]{zbMATH07852702} we write $B(k,t)$ for \eqref{eq:divchain}, so that $B(1,-)\equiv 1$. The higher $B(\ell,-)$ are then computable via the recursion
  \begin{equation*}
    B(k+1,t)
    =
    \sum_{d|t}d\cdot B(k,d).
  \end{equation*}
  Given the induction base case, it thus remains to argue that the numbers $N(k,t)$ of isomorphism classes of commuting incompressible $k$-tuples on $X$ satisfy the same recurrence:
  \begin{equation*}
    N(k+1,t)
    =
    \sum_{d|t}d\cdot N(k,d).
  \end{equation*}
  This follows fairly easily from Theorem \ref{th:allcomm}: specifying a group with $(k+1)$-generators $\left(\Gamma,(\gamma_i)_{i=1}^{k+1}\right)$ on $X$ entails
  \begin{itemize}
  \item first specifying the subgroup
    \begin{equation*}
      \Gamma_{small}:=\braket{\gamma_1,\ \cdots,\ \gamma_k}
    \end{equation*}
    generated by the first $k$ of those generators, meaning a divisor $d$ of $t$ (the order of that smaller group) and one of the $N(k,d)$ structures on that subgroup;

  \item and then, having made the identification
    \begin{equation*}
      \Gamma/\Gamma_{small}\cong \bZ/(t/d)
      \quad\text{so that the image of $\gamma_{k+1}$ is 1},
    \end{equation*}
    specifying which of the $d$ elements of $\Gamma_{small}$ equals $\gamma_{k+1}^{t/d}$.
  \end{itemize}
\end{proof}
The present discussion also allows for a classification of ideal-simple Lyubashenko YB-solutions. According to Example \ref{exs:exsolsimple}(\ref{doiideal}), we have to classify all incompressible pairs $\{f, \, g\}$ of two commuting self-maps $f$, $g: X \to X$. This will be a straightforward consequence of Theorem \ref{th:allcomm}, but we phrase the result somewhat more concretely than in that statement.

\begin{corollary}\label{cor:simpbimag}
  The ideal-simple Lyubashenko YB-solutions of Example \ref{exs:bmlyb}(\ref{item:LB}) are precisely those with the data $(X, \ f,\ g)$ classified up to isomorphism by triples
  \begin{equation*}
    (m,\ n,\ d)\in \bZ_{\ge 0}\times \bZ_{>0}^2,\quad d\le |\bZ/m|
  \end{equation*}
  as follows:
  \begin{itemize}
  \item $X=\bZ/m\times \bZ/n$ (with $\bZ/m=\bZ$ when $m=0$);
  \item $f$ is translation by $(1,0)$;
  \item and
    \begin{equation*}
      \bZ/m\times \bZ/n\ni
      (a,b)
      \xmapsto{\quad g\quad}
      \begin{cases}
        (a,b+1) &\text{if}\quad 0\le b<n-1\\
        (a-d,0) &\text{if}\quad b=n-1.\\
      \end{cases}
    \end{equation*}
  \end{itemize}
\end{corollary}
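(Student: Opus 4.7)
The plan is to chain together previously established results, reducing the corollary to a concrete calculation with two-generated abelian groups. By Example~\ref{ex:exsolsimple}\Cref{doiideal}, a Lyubashenko YB-solution $R_{(f,g)}$ is simple precisely when the commuting pair $\{f,g\}$ is incompressible in the sense of \Cref{def:simple}\Cref{item:incomprdef}. Applying \Cref{th:allcomm} to $k=2$ then reclassifies such pairs, up to isomorphism, as abelian group structures on $X$ equipped with an ordered pair $(\gamma_1,\gamma_2)\in X^2$ that generates $X$ as a monoid; the maps $f$ and $g$ are realized as translation by $\gamma_1$ and $\gamma_2$ respectively. The remainder of the proof is re-encoding this group-theoretic data as the concrete triples $(m,n,d)$ and explicit formulas of the statement.

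For this, I would present the group $X$ as $\bZ^2/L$ via the surjection $\bZ^2 \twoheadrightarrow X$, $(a,b)\mapsto a\gamma_1+b\gamma_2$, whose kernel $L\subseteq \bZ^2$ determines the pair $(X,\gamma_1,\gamma_2)$ up to isomorphism and vice versa. By Hermite normal form, every sublattice $L\subseteq\bZ^2$ admits a unique basis of the shape $\{(m,0),(d,n)\}$ with $m\in \bZ_{\ge 0}$, $n\in \bZ_{>0}$, and $d$ a representative modulo $m$, with the convention that $L=\bZ(d,n)$ when $m=0$. The monoid-generation hypothesis then translates to a constraint on $d$: for $m\ge 1$ it is automatic, since the relation $m\gamma_1=0$ allows $\gamma_1^{-1}=\gamma_1^{m-1}$; for $m=0$ it forces $d\ge 1$, because reaching elements with negative $\gamma_1$-coordinate requires using the relation $d\gamma_1=-n\gamma_2$ to trade a $\gamma_1$-deficit for positive powers of $\gamma_2$. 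This delivers exactly the parameter space of the statement, using $d\in\{1,\ldots,m\}$ as the representatives of $\bZ/m$ when $m\ge 1$ (so that $d=m$ replaces $d=0$).

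Finally, identify $X=\bZ^2/L$ with $\bZ/m\times\bZ/n$ as a set via the representatives $\{(a,b):a\in \bZ/m,\ 0\le b<n\}$. Under this identification, translation by $\gamma_1=\overline{(1,0)}$ becomes $(a,b)\mapsto(a+1,b)$, i.e.\ the desired $f$, while translation by $\gamma_2=\overline{(0,1)}$ becomes $(a,b)\mapsto(a,b+1)$ for $0\le b<n-1$, and at $b=n-1$ one must re-normalize by subtracting the relation $(d,n)$, producing the carry formula $(a,n-1)\mapsto (a-d,0)$ that defines $g$. The main obstacle is the bookkeeping in the Hermite step: one must check that this normalization of $d$ yields a genuinely bijective parametrization (no double counting between $d$'s that differ by $m$) and handle the degenerate case $m=0$ separately, where the first Hermite vector disappears and the rank of $L$ drops. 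Once this is in place, everything else is mechanical from \Cref{th:allcomm} and standard lattice theory.
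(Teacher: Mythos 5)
Your proposal is correct and takes essentially the same route as the paper: both reduce, via \Cref{ex:exsolsimple} \Cref{doiideal} and \Cref{th:allcomm}, to classifying abelian group structures on $X$ together with an ordered pair of monoid generators, and then normalize that data to the triple $(m,n,d)$. The only difference is cosmetic and lies in the normalization step --- the paper reads off $m$, $n$, $d$ from the subgroup $\langle f\rangle\cong\bZ/m$, the quotient $X/\langle f\rangle\cong\bZ/n$ and the relation $g^n=f^{-d}$, whereas you present the group as $\bZ^2/L$ and put $L$ in Hermite normal form; this is the same computation in different packaging, with the mild advantage that bijectivity of the parametrization becomes transparent.
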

\begin{proof}
  By Theorem \ref{th:allcomm}, commuting incompressible pairs $\{f,g\}$ on $X$ can be identified (up to isomorphism) with pairs of monoid generators for an abelian group structure on $X$. One first specifies the cyclic group $\Gamma\cong \bZ/m$ (possibly infinite, corresponding to $m=0$) generated (as a group) by $f$, with the latter acting as translation by $1\in \bZ/m$. $g$ operates incompressibly on the quotient $X/\Gamma$, so that quotient must be a finite cyclic group $\bZ/n$. Finally, because $f$ and $g$ generate $X$ as a {\it monoid} (not just group), $g^n\in\Gamma$ must be translation by some {\it negative} $-d$.
\end{proof}

\section{Solutions to the Yang-Baxter equation via P{\l}onka (bi)-magmas}\label{se:plonkabi}

In this section we introduce the main characters of our paper, namely \emph{P{\l}onka bi-magmas}. Building on ideas from previous sections, we will arrive naturally at this concept and the induced YB-solution. We begin by recalling P{\l}onka's concept of a \emph{$2$-cyclic magma}, as first introduced in \cite[Lemma 3]{plonka2}.
\begin{definition}\label{def:2cyc}
A magma $(X, \, \cdot)$ is called a \emph{$2$-cyclic magma} if the multiplication satisfies the following compatibility conditions for any $x$, $y$, $z\in X$:
\begin{equation}\label{eq:ecyccom}
(x \cdot y) \cdot z = (x \cdot z) \cdot y,   \quad x \cdot (y \cdot z) = x \cdot y, \quad x \cdot x = x, \quad (x \cdot y) \cdot y = x.
\end{equation}
\end{definition}
The first axiom of \eqref{eq:ecyccom} is just a commutativity condition for all right translation maps and the third axiom is an idempotent/band condition for the magma $(X, \, \cdot)$. The last compatibility of \eqref{eq:ecyccom} is the right involutory law as given by \eqref{eq:remaguni}: all right translation maps are involutions on $X$. The second compatibility condition of \eqref{eq:ecyccom} is called the \emph{right reduction law} in \cite{romanowska}. 

\begin{remarks}\label{res:plwork}
  \begin{enumerate}[(1)]
  \item Using the right translation maps (i.e. $r_z(x) = x\cdot z$ for all $x$, $z \in X$) the four compatibility conditions of \eqref{eq:ecyccom} can be rewritten as follows:
    \begin{equation}\label{eq:ecyccomb}
      r_z \circ r_y = r_y \circ r_z, \quad r_{r_z (y)} = r_y, \quad  r_y(y) = y, \quad  r_y^2 = {\rm Id}_X
    \end{equation}
    for all $y$, $z \in X$, where $\circ$ is the usual composition of maps and $r_y^2 = r_y \circ r_y$. Thus, defining a $2$-cyclic magma $(X, \, \cdot)$ is equivalent to specifying a pair $(X, \, (r_x)_{x\in X})$, consisting of a family of self-maps $r_x : X \to X$, indexed by the set $X$, satisfying the compatibility conditions \eqref{eq:ecyccomb}.
  \item The class of $2$-cyclic magmas is one of the four types of varieties of algebras in P{\l}onka's structure theorem \cite[Theorem 1]{plonka2} on universal algebras having exactly $n$ $n$-ary operations depending on all its variables for each positive integer $n \geq 1$.

  \item\label{quan2cyl} We mention that the concept of a \emph{$2$-reductive involutory medial quandle} as introduced and studied in \cite{ppsz} is equivalent to P{\l}onka's $2$-cyclic magma. More precisely,  $(X, \, \cdot)$ is a $2$-reductive involutory medial quandle in the sense of \cite{ppsz} if and only if the opposite magma $(X, \cdot_{\rm op})$ is a P{\l}onka $2$-cyclic magma.
  \end{enumerate}
\end{remarks}
We now consider the following weakening of Definition \ref{def:2cyc}, in which only the first two compatibility conditions are kept:

\begin{definition}\label{def:rkm}
  A magma $(X, \, \cdot)$ is called a \emph{right P{\l}onka magma} if the multiplication satisfies the following two compatibility conditions for any $x$, $y$, $z\in X$:
\begin{equation}\label{eq:rkm1}
(x \cdot y) \cdot z = (x \cdot z) \cdot y,    \qquad x \cdot (y \cdot z) = x \cdot y.
\end{equation}
We denote by ${\rm Mag}^{\rm rP}$ the full subcategory of ${\rm Mag}$ of all right P{\l}onka magmas.
\end{definition}

\begin{remarks}\label{motivplon}
\begin{enumerate}[(1)]  

\item\label{defechiv}  The adjective \emph{right} introduced in  Definition \ref{def:rkm} comes from the fact that the two axioms of \eqref{eq:rkm1} can be rewritten equivalently using the right translation maps. In fact, defining a right P{\l}onka magma is equivalent to giving a tuple $\bigl(X, \, (r_x)_{x\in X} \bigl)$, consisting of a set $X$ and a family of self-maps $r_x : X \to X$, indexed by the set $X$, such that for any $x$, $y \in X$:
    \begin{equation}\label{ecyccomccc}
      r_x \circ r_y = r_y \circ r_x, \quad r_{r_x (y)} = r_y.
    \end{equation}
 Two pairs $(X, \, (r_x)_{x\in X})$ and $(Y, \, (s_y)_{y\in Y})$ as above are called \emph{isomorphic} if the corresponding right P{\l}onka magmas are isomorphic as usual magmas. We can easily prove that this is equivalent to the fact that there exists a bijective map $\sigma : X \to  Y$ such that $s_{\sigma (x)} = \sigma \circ r_x \circ \sigma^{-1}$, for all $x\in X$. In particular, we obtain that the automorphism group of the right P{\l}onka magma $(X, \, (r_x)_{x\in X})$ is equal to:
    \begin{equation}\label{eq:grauto}
      {\rm Aut} \, \bigl(X, \, (r_x)_{x\in X} \bigl) = \{\sigma \in \Sigma_X \, | \, \sigma \circ r_x  = r_{\sigma (x)} \circ \sigma, \,  \forall x\in X\}\leq \Sigma_X
    \end{equation}
    which is a twisted version of what we have called in \cite[Definition 1.1]{acm1} the \emph{centralizer} or \emph{automorphism group} of the tuple $(r_x)_{x\in X}$ of self-maps of $X$.

  \item A $2$-cyclic magma as defined in Definition \ref{def:2cyc} is exactly a right involutory band P{\l}onka magma $(X, \, \cdot)$. We mention that right band P{\l}onka magmas were studied in \cite{romanowska} under the name of \emph{LIR-groupoids}.

\end{enumerate}
\end{remarks}

\begin{examples}\label{exs:exerkm}
  \;
  \begin{enumerate}[(1)]

  \item Any sub-magma of a right P{\l}onka magma $(X, \cdot)$ and any quotient $(X/ \approx, \, \bullet)$, in the sense of \eqref{eq:magfactor} for a congruence on $X$, is again
  a right P{\l}onka magma. The left-zero semigroup structure on any set $X$ is a right P{\l}onka magma.

   \item\label{generic} If $f: X \to X$ is a function, then $(X, \, \cdot_f)$ is a right P{\l}onka magma, where the multiplication $\cdot_f$ is given by
    $x \cdot_f y := f(x)$, for all $x$, $y\in X$. Moreover, $(X, \, \cdot_f)$ is a right involutory magma if and only if $f$ is an involution on $X$, i.e.
    $f^2 = {\rm Id}_X$.

    Two magmas $(X, \, \cdot_f)$ and $(X, \, \cdot_g)$ are isomorphic if and only if the functions $f$, $g: X\to X$ are conjugate, i.e. there exists
    $\sigma \in \Sigma_X$ such that $g = \sigma \circ f \circ \sigma^{-1}$. Thus, if $X$ is a finite set with $|X| = n$, then the number
    of isomorphism classes of all right P{\l}onka magmas of the form $(X, \, \cdot_f)$ is the Davis number $d(n)$ (\cite[Remark 1.4]{acm1}). On the other hand, the number
    of isomorphism classes of all right involutory magmas of the form $(X, \, \cdot_f)$ is  $1+\left\lfloor\frac n2\right\rfloor$ (see the proof of \cite[Corollary 2.24]{acm1}).

    \item\label{PTVA1} The following observation was kindly communicated to us by Philip Ture\v{c}ek: a right P{\l}onka magma $(X, \, \cdot)$ is associative
    if and only if there exists an idempotent self-map $f = f^2: X \to X$ such that $(X, \, \cdot) = (X, \, \cdot_f)$. In particular, using say \cite[Lemma 1.4]{acm1}, we obtain that the number of isomorphism classes of all associative right P{\l}onka magmas on a set with $n$ elements is the Euler partition number $p(n)$.

    Indeed, assume first that $(X, \, \cdot)$ is an associative P{\l}onka magma; then we have:
    $x y = xyz = xzy = xz$, for all $x$, $y$, $z\in X$; in particular, we obtain that $x y = x^2$, for all $x$, $y\in X$, i.e. each line in the Cayley table is the constant $x^2$. Now, thanks to the reduction law, the map $f: X \to X$, $f (x) := x ^2$ is an idempotent map and thus $(X, \, \cdot) = (X, \, \cdot_f)$; the converse
    is straightforward.

   \item If a right P{\l}onka magma $(X, \, \cdot)$ is commutative, then $(X, \, \cdot)$ is the constant magma, i.e. there exists $c\in X$ such that $x\cdot y := c$, for all $x$, $y\in X$. Indeed, let $(X, \, \cdot)$ be a commutative right P{\l}onka magma. Then, we have:
    $(x \cdot y) \cdot z = (y \cdot x) \cdot z = (y \cdot z) \cdot x = x \cdot (y \cdot z)$, for all $x$, $y$, $z\in X$. Thus, $(X, \, \cdot)$ is associative and hence there exists an idempotent map $f: X \to X$ such that $x \cdot y = f(x)$, for all $x$, $y\in X$. But then $f (x) = x \cdot y = y\cdot x = f(y)$, i.e. $f$ is the constant map.

    \item\label{PTVA2} The  number of isomorphism classes of all right involutory P{\l}onka magmas on a set with $1, \cdots, 10$ elements is: $1$, $2$, $4$, $12$, $37$, $164$, $849$, $6081$, $56164$, $698921$. It is the sequence \cite{slo-A361720} that was kindly computed at our request by Philip Ture\v{c}ek and Vladimir Antofi using a computer program. On the other hand, the number of isomorphism classes of all right P{\l}onka magmas has a much more explosive growth: on a set with $1$, $2$ or $3$ elements this number is: $1$, $3$, $11$.
\end{enumerate}
\end{examples}
The left hand version of Definition \ref{def:rkm} is the following:

\begin{definition}\label{def:lkm}
A magma $(X, \, \ast)$ is called a \emph{left P{\l}onka magma} if the multiplication satisfies the following compatibility conditions for any $x$, $y$, $z\in X$:
\begin{equation}\label{eq:remag}
x \ast (y \ast z) = y \ast (x \ast z),  \qquad (x \ast y) \ast z = y \ast z.
\end{equation}
The full subcategory of ${\rm Mag}$ of all left P{\l}onka magmas will be denoted by ${}^{\rm lP}{\rm Mag}$.
\end{definition}
Right and left P{\l}onka magmas are now used to introduce a new variety of universal algebras which
will be shown to be a subcategory of ${}_{\rm YB}{\rm BiMag}$ of all Yang-Baxter bi-magmas as defined in Definition \ref{def:ybbimagm}.

\begin{definition}\label{def:kbimag}
A \emph{P{\l}onka bi-magma} is a bi-magma $(X, \, \cdot, \, \ast)$ such that:

(PM1) $(X, \, \cdot)$ is a right P{\l}onka magma;

(PM2) $(X, \, \ast)$ is a left P{\l}onka magma;

(PM3) The following compatibility conditions hold for any $x$, $y$, $z\in X$:
\begin{align}
  x \ast (y \cdot z) &= (x \ast y) \cdot z\label{eq:KM3a}\\
  (x \cdot z) \ast y  &= x \ast y\label{eq:KM3b} \\
  x \cdot (y\ast z) &= x \cdot z.\label{eq:KM3c}
\end{align}
A P{\l}onka bi-magma $(X, \, \cdot, \, \ast)$ is called \emph{unitary} if for any $x$, $y\in X$:
\begin{equation}\label{eq:uniKbimag}
(x \ast y) \cdot x = y.
\end{equation}
We denote by ${\rm PBiMag}$ the full subcategory of ${\rm BiMag}$ of all P{\l}onka bi-magmas which is a  variety of algebras in the sense of universal
algebras.
\end{definition}

\begin{remark}\label{pp2red}
Four of the seven axioms that define a P{\l}onka bi-magma in Definition \ref{def:kbimag} also appear in a different context in the work of P. Jedli\v{c}ka and A. Pilitowska \cite[Definition 3.1]{PP1}. Indeed, let $(X, \, \cdot, \, \ast)$ be a bi-magma and define, for any $x$, $y\in X$ the family of maps $\sigma_x (y) := x \ast y$ and $\tau_y (x) := x\cdot y$. Then, we can prove that the triple $(X, \, \sigma, \, \tau)$ is \emph{$2$-reductive} in the sense of \cite[Definition 3.1]{PP1} if and only if the compatibility conditions \eqref{eq:KM3b}, \eqref{eq:KM3c} and the right hand axioms of \eqref{eq:rkm1} and \eqref{eq:remag} hold.
\end{remark}

\begin{examples}\label{exs:exkimbimag}
  \;
  \begin{enumerate}[(1)]

  \item\label{triviKim} Any set $X$ has a \emph{trivial} unitary P{\l}onka bi-magma structure via the left/right-zero semigroup structures: $x\cdot y := x$ and $x \ast y := y$, for all $x$, $y\in X$. Moreover, if $(X, \, \ast)$ is the right-zero semigroup, then $(X, \, \cdot, \, \ast)$ is a P{\l}onka bi-magma if and only if $(X, \, \cdot)$ is a right P{\l}onka magma. Dually, if $(X, \, \cdot)$ is the left-zero semigroup, then $(X, \, \cdot, \, \ast)$ is a P{\l}onka bi-magma if and only if $(X, \, \ast)$ is a left P{\l}onka magma.

  \item\label{lefbim} Let $(X, \, \cdot)$ be a right P{\l}onka magma. Then we can easily prove that $(X, \, \cdot, \, \ast := \cdot_{\rm op})$ is a P{\l}onka bi-magma, where
    $\cdot_{\rm op}$ is the opposite magma of $(X, \, \cdot)$. Furthermore, $(X, \, \cdot, \, \cdot_{\rm op})$ is a unitary bi-magma if and only if $(X, \, \cdot)$ is a right involutory magma. In the same fashion, if $(X, \, \ast)$ is a left P{\l}onka magma, then $(X, \, \cdot :=\ast_{\rm op}, \, \ast)$ is a P{\l}onka bi-magma (that
    is unitary if $(X, \, \ast)$ is a left involutory magma).

  \item\label{LyPl_can} Let $f$, $g : X \to X$ be two self-functions and define $x\cdot_f y := f(x)$ and $x\ast^g y :=g(y)$, for all $x$, $y\in X$. Then we can see that
    $X_f^g = (X, \, \cdot_f, \, \ast^g)$ is a P{\l}onka bi-magma if and only if $f \circ g = g \circ f$, i.e.  $X_f^g = (X, \, \cdot_f, \, \ast^g)$ is the
    Lyubashenko bi-magma from Example \ref{exs:bmlyb}(\ref{item:LB}). Furthermore, $(X, \, \cdot_f, \, \ast^g)$ is a unitary bi-magma if and only if $f$ and $g$ are bijections and $g = f^{-1}$.
\end{enumerate}
\end{examples}
We can now construct new families of YB-solutions arising from P{\l}onka bi-magmas:

\begin{theorem}\label{th:teorema1}
  Let $(X, \, \cdot, \, \ast)$ be a P{\l}onka bi-magma. Then:

  \begin{enumerate}[(1)]
  \item The canonical associated map defined for any $x$, $y\in X$ by:
    \begin{equation}\label{eq:solgenK}
      R = R_{(\cdot, \, \ast)} : X\times X \to X\times X, \qquad R (x, \, y) = (x\cdot y, \, x\ast y)
    \end{equation}
    is a solution of the set-theoretic Yang-Baxter equation.

  \item $R$ is unitary if and only if the P{\l}onka bi-magma $(X, \, \cdot, \, \ast)$ is unitary.

  \item $R$ is involutive if and only if $(X, \, \cdot)$ is a right involutory magma and $(X, \, \ast)$ is a left involutory magma. 
  
  \item\label{item:sing} $R$ is left-right non-degenerate in the sense of \cite[Definition 1.1]{etingof} if and only if $X = \{ \star \}$ is a singleton set.
  
  \end{enumerate}
\end{theorem}

\begin{proof}
  \begin{enumerate}[(1)]
  \item In the proof we will use all seven axioms from the definition of the concept of P{\l}onka bi-magma. Let $x$, $y$, $z\in X$. Using \eqref{eq:KM3c}, \eqref{eq:remag}, \eqref{eq:rkm1} and \eqref{eq:KM3b}, we have:


    \begin{align*}
      R^{12} R^{13} R^{23} (x, \, y, \, z)  & =   R^{12} R^{13} (x, \, y\cdot z, \, y\ast z ) \\
                                            & =  R^{12} \bigl( x\cdot (y\ast z), \, y\cdot z, \, x \ast (y \ast z)   \bigl)   \\
                                            & =  R^{12} \bigl( x\cdot z, \, y\cdot z, \, y \ast (x \ast z)   \bigl)  \\
                                            & =  \bigl( (x\cdot z) \cdot (y \cdot z), \,  (x\cdot z) \ast (y\cdot z), \, y \ast (x \ast z)   \bigl) \\
                                            & =  \bigl( (x\cdot z) \cdot y, \,  x \ast (y\cdot z), \, y \ast (x \ast z)   \bigl)
    \end{align*}
    and using \eqref{eq:rkm1}, \eqref{eq:KM3b}, \eqref{eq:KM3a}, \eqref{eq:remag} and \eqref{eq:KM3c}, we have:
    \begin{align*}
      R^{23} R^{13} R^{12} (x, \, y, \, z)  & =   R^{23} R^{13} (x \cdot y, \, x\ast y, \, z ) \\
                                            & =  R^{23} \bigl( (x\cdot y) \cdot z, \, x\ast y, \, (x \cdot y) \ast z   \bigl)   \\
                                            & =  R^{23} \bigl( (x\cdot z) \cdot y, \, x\ast y, \, x\ast z  \bigl)  \\
                                            & =  \bigl( (x\cdot z) \cdot y, \, (x\ast y) \cdot (x\ast z), \, (x\ast y) \ast (x \ast z)   \bigl) \\
                                            & =  \bigl( (x\cdot z) \cdot y, \,  x \ast \bigl(y \cdot (x \ast z) \bigl), \, y \ast (x \ast z)   \bigl) \\
                                            & =  \bigl( (x\cdot z) \cdot y, \,  x \ast (y \cdot z), \, y \ast (x \ast z)   \bigl)
    \end{align*}
    as needed.

  \item First we observe that
    $R^{21} (x, \, y) = (\tau_X \circ R \circ \tau_X ) (x, \, y) = (y\ast x, \, y\cdot x)$. Thus, using \eqref{eq:remag}, \eqref{eq:rkm1} and
    \eqref{eq:KM3a} we have:
    \begin{align*}
      R^{21} R (x, \, y)  & =   R^{12} (x\cdot y, \, x\ast y ) \\
                          & =  \bigl( (x \ast y) \ast (x \cdot y), \, (x \ast y) \cdot (x \cdot y) \bigl)   \\
                          & =  \bigl( y \ast (x \cdot y), \, (x \ast y) \cdot x \bigl)  \\
                          & =  \bigl( (y\ast x) \cdot y, \, (x \ast y) \cdot x \bigl)
    \end{align*}
    for all $x$, $y\in X$. In the same fashion, using the same compatibilities, we obtain that $R \circ R^{21} (x, \, y) = \bigl( (y\ast x) \cdot y, \, (x \ast y) \cdot x \bigl) = R^{21} R (x, \, y)$. Thus, $R$ is a unitary solution if and only if \eqref{eq:uniKbimag} holds, i.e. the P{\l}onka bi-magma $(X, \, \cdot, \, \ast)$ is unitary.

  \item Taking into account \eqref{eq:KM3b} and \eqref{eq:KM3c} we easily obtain that $R^2 (x, \, y) = \Bigl( (x \cdot y) \cdot y, \, x \ast (x \ast y) \Bigl)$.  Thus, $R^2 = {\rm Id}_{X\times X}$ if and only if $(X, \, \cdot)$ is a right involutory magma and $(X, \, \ast)$ is a left involutory magma.
      
  \item Assume first that $R$ is left-right non-degenerate. It then follows from \eqref{eq:KM3c} and \eqref{eq:KM3b} that $y \ast z = z$ and $x \cdot z = x$, for all $x, y, z \in X$. Hence, $R = {\rm Id}_{X^2}$, which is left-right non-degenerate if and only if $X = \{ \star \}$ is a singleton. The converse is obvious. 
   \end{enumerate}
\end{proof}

\begin{remark}\label{nedekim}
  \begin{enumerate}[(1)]

  \item Let $(X, \, \cdot, \, \ast)$ and $(X', \, \cdot', \, \ast')$ be two P{\l}onka bi-magmas. Then the associated YB-solutions
    $(X, \, R_{(\cdot, \, \ast)})$ and $(X', \, R_{(\cdot', \, \ast')})$ as given by Theorem \ref{th:teorema1} are isomorphic in $\mathcal{YB}$ if and only if
    $(X, \, \cdot, \, \ast) \cong (X', \, \cdot', \, \ast')$ as P{\l}onka bi-magmas. Furthermore,
    ${\rm Aut} (X, \, R_{(\cdot, \, \ast)}) = {\rm Aut} (X, \, \cdot) \cap {\rm Aut} (X, \, \ast) \leq \Sigma_X$. We note that the last two automorphism
    groups are twisted centralizers of all right/left translations as defined in \eqref{eq:grauto}.

  \item Let $(X, \, \cdot, \, \ast)$ be a P{\l}onka bi-magma. Then, as noted in Theorem \ref{th:teorema1}(\ref{item:sing}), except the trivial P{\l}onka bi-magma on a singleton set, the YB-solutions $R = R_{(\cdot, \, \ast)} : X\times X \to X\times X$ constructed in \eqref{eq:solgenK} are all (left-right) degenerate, but they could be bijective, involutive, unitary or (right-left) non-degenerate.  For instance, if $R_{(\cdot, \, \ast)}$ is involutive, then it is (right-left) non-degenerate. Indeed, if $R_{(\cdot, \, \ast)}$ is involutive, then $(X, \, \cdot)$ is a right involutory (resp. $(X, \, \ast)$ is a left involutory) magma. Thus, all right translation (resp. left translation) maps are involutions on $X$, in particular bijective. This observation provides further evidence for Remark \ref{re:nede-quasigr} by highlighting another member of the broader non-degeneracy family.
  \end{enumerate}
\end{remark}

As a special case of Theorem \ref{th:teorema1} we obtain another canonical embedding
\begin{equation*}
{\rm Mag}^{\rm rP} \lhook\joinrel\xrightarrow{\quad}  {}_{\rm YB}{\rm BiMag} \cong \mathcal{YB}, \qquad (X, \, \cdot) \mapsto (X, \, \cdot, \, \cdot_{\rm op})
\end{equation*}
Equivalently rephrased, we have:

\begin{corollary}\label{cor:famI}
Let $(X, \, \cdot)$ be a right P{\l}onka magma. Then:
 \begin{enumerate}[(1)]
\item The map
\begin{equation}\label{eq:famII}
R = R_{(\cdot, \, \cdot_{\rm op})} : X\times X \to X\times X, \qquad R (x, \, y) = (x\cdot y, \, y\cdot x)
\end{equation}
is a solution of the Yang-Baxter equation.

\item $R$ is unitary if and only if $(X, \, \cdot)$ is a right involutory magma;

\item $R$ is unitary and diagonal if and only if $(X, \, \cdot)$ is a $2$-cyclic magma.
\end{enumerate}
\end{corollary}

\begin{proof}
  Indeed, if $(X, \, \cdot)$ is a right P{\l}onka magma, then using Example \ref{exs:exkimbimag}(\ref{lefbim}), we obtain that $(X, \, \cdot, \, \ast := \cdot_{\rm op})$ is a P{\l}onka bi-magma. The proof of (1)-(3) now follows from Theorem \ref{th:teorema1}. We only mention that in this case $R^{21} = R$; hence, $R$ is a unitary solution if and only if $R$ is involutive.
  \end{proof}
The next result can be seen as the right P{\l}onka magma analogue of the structure theorem for Kimura semigroups \cite[Theorem 2]{Kim2}. It rephrases the concept of right P{\l}onka magmas in an equivalent way using the combinatorial language of partitions on sets and self-maps rather than in terms of universal algebras. The proof follows the approach used by P{\l}onka for another variety of algebras \cite[Theorem 3]{plonka2}.

\begin{theorem}\label{th:structRPm}
  Let $X$ be a set. Then:

  \begin{enumerate}[(1)]

  \item\label{item:RPma} $(X, \, \cdot)$ is a right P{\l}onka magma if and only if there exists a partition $X = \coprod_{i\in I} \, X_i$ and a tuple $\bigl(f_i^j \bigl)_{(i,j) \in I\times I}$ of functions $f_i^j : X_i \to X_i$ such that for any $i$, $j$, $k\in I$:
    \begin{equation}\label{eq:comfunct}
      f_i^j \circ f_i^k = f_i^k \circ f_i^j.
    \end{equation}
    The correspondence is given such that the right P{\l}onka magma structure $\cdot$ on $X$ associated to the pair
    $\bigl( \coprod_{i\in I} \, X_i, \,  \bigl(f_i^j \bigl)_{(i,j) \in I\times I} \bigl)$ is given by:
    \begin{equation}\label{eq:multRPm}
      x_i \cdot x_j := f_i^j (x_i)
    \end{equation}
    for all $x_i \in X_i$, $x_j \in X_j$ and $i$, $j\in I$.\footnote{We note that the multiplication given by \eqref{eq:multRPm}, does not depend on the specific element $x_j \in X_j$ chosen, but only on its "degree" $j$.}

  \item\label{RPmb} $(X, \, \cdot)$ is a right involutory P{\l}onka magma if and only if $f_i^j : X_i \to X_i$ is an involution on $X_i$, for all $i$, $j\in I$;

  \item\label{RPmc} $(X, \, \cdot)$ is a right band P{\l}onka magma if and only if $f_i^{i} = {\rm Id}_{X_i}$, for all $i\in I$;

  \item\label{RPme} $(X, \, \cdot)$ is a $2$-cyclic magma if and only if $f_i^j : X_i \to X_i$ is an involution and $f_i^{i} = {\rm Id}_{X_i}$,
    for all $i$, $j\in I$.
  \end{enumerate}
\end{theorem}

\begin{proof}
  First of all, using \eqref{eq:comfunct}, it is easy to prove that the multiplication defined by \eqref{eq:multRPm} on the set $X = \coprod_{i\in I} \, X_i$ is a right P{\l}onka magma structure. Conversely, let $(X, \cdot)$ be a right P{\l}onka magma. We define the congruence $\approx$ on $X$ as follows: $a \approx b$ if and only if $x\cdot a = x \cdot b$, for all $x\in X$. Then, $\approx$ is an equivalence relation on $X$ and, thanks to the right reduction law, is also a congruence. Let $(a_i)_{i\in I} \subseteq X$ be a system of representatives of the equivalence relation $\approx$. Then the set of all equivalence classes $\{X_i := \overline{a_i} \, | \, i\in I \}$ is a partition on $X$ and define for any $x_i \in X_i$ and $x_j \in X_j$, $f_i^j (x_i) := x_i \cdot x_j \in X_i$. The rest of the proof follows from compatibility conditions \eqref{eq:rkm1}. We also note that the quotient magma $(X/ \approx, \, \cdot)$, as given by \eqref{eq:magfactor}, is the left-zero semigroup: $\overline{a_i} \cdot \overline{a_j} = \overline{a_i \cdot a_j} = \overline{f_i^j(a_i)} = \overline{a_i}$, since $f_i^j(a_i) \in X_i = \overline{a_i}$, for all $i$, $j\in I$.

  The other statements are obtained directly from the definitions of the respective concepts, formulated using the multiplication as defined by \eqref{eq:multRPm}.
\end{proof}

\begin{remarks}\label{res:pl1000}
  \begin{enumerate}[(1)]

  \item\label{strplzero} The structure of $2$-cyclic magmas from Theorem \ref{th:structRPm}(\ref{RPme}) was indicated by P{\l}onka: see the remark preceding \cite[Theorem 4]{plonka2}. In this case, each component $X_i$ of the partition $X = \coprod_{i\in I} \, X_i$ is the left-zero semigroup: indeed, for any $x_i$ and $x_i' \in X_i$, we have $x_i \cdot x_i' = f_i^i (x_i) = x_i$.

    On the other hand, the structure of right idempotent P{\l}onka magma from Theorem \ref{th:structRPm}(\ref{RPmc}) is exactly \cite[Theorem 2.2]{romanowska}.

  \item\label{autograded} Let  $(X, \cdot)$ be a right P{\l}onka magma written as $(X, \cdot) = \bigl(\coprod_{i\in I} \, X_i, \,  \bigl(f_i^j \bigl)_{(i,j) \in I\times I} \bigl)$ using Theorem \ref{th:structRPm}. We denote by ${\rm Aut}^{\rm gr} (X, \cdot ) \leq {\rm Aut} (X, \cdot )$ the subgroup of all \emph{graded automorphisms} of $(X, \cdot)$, i.e. automorphisms  $\sigma: (X, \cdot) \to (X, \cdot)$ of the magma that preserve the degree in the graded sense: $\sigma (X_i) \subseteq X_i$, for all $i\in I$. As an application of Theorem \ref{th:structRPm} we can easily obtain that

    \begin{equation}\label{autgrad}
      {\rm Aut}^{\rm gr} (X, \cdot )  = \{\sigma \in \Sigma_X \, | \, \sigma \circ f_i^j  = f_i^j \circ \sigma, \,  \forall i, j \in I\}\leq \Sigma_X
    \end{equation}
    which is the centralizer of the tuple $\bigl(f_i^j \bigl)_{(i,j) \in I\times I}$ as defined in  \cite[Definition 1.1]{acm1}.

  \item\label{descinde} In the decomposition given by Theorem \ref{th:structRPm} we observe that each component $X_i \subseteq X$ of the partition is a right ideal
    of the magma $(X, \cdot)$ and has the form $(X_i, \, \cdot_{f_i^i})$ as given in Example \ref{exs:exerkm}(\ref{generic}), for a self-function on $X_i$.

    On the other hand, the same decomposition is singleton (that is $|I| = 1$) if and only if any two elements $a$, $b\in X$ are congruent in the sense of the proof of Theorem \ref{th:structRPm}; this is easily shown to be equivalent to the existence of a self-function $f: X \to X$ such that $(X, \, \cdot) = (X, \, \cdot_f)$, as constructed in Example \ref{exs:exerkm}(\ref{generic}).

\item\label{strucleftpm} A magma $(X, \, \ast)$ is a left P{\l}onka magma if and only if the opposite magma $(X, \, \ast_{\rm op})$ is a right P{\l}onka magma, where $x \ast_{\rm op} y := y\ast x$, for all $x$, $y\in X$. Thus, we have an isomorphism of categories ${}^{\rm lP}{\rm Mag} \cong {\rm Mag}^{\rm rP}$. The structure theorem of left P{\l}onka magmas follows now from this observation and Theorem \ref{th:structRPm}(\ref{item:RPma}). We indicate it because we will use it as well.

Giving a left P{\l}onka magma structure $\ast$ on a set $X$ is equivalent to specifying a partition $X = \coprod_{i\in I} \, X_i$ and a tuple
$\bigl(g_i^j \bigl)_{(i,j) \in I\times I}$ of functions $g_i^j : X_i \to X_i$ such that for any $i$, $j$, $k\in I$:
 \begin{equation}\label{comfunctleft}
    g_i^j \circ g_i^k = g_i^k \circ g_i^j.
     \end{equation}
The correspondence is given such that the left  P{\l}onka magma structure $\ast$ on $X$ associated to the pair
    $\bigl(\coprod_{i\in I} \, X_i, \,  \bigl(g_i^j \bigl)_{(i,j) \in I\times I} \bigl)$ is given by:
    \begin{equation}\label{multRPmleft}
      x_i \ast x_j := g_j^i (x_j)
    \end{equation}
    for all $x_i \in X_i$, $x_j \in X_j$ and $i$, $j\in I$.

  \item A decomposition as in Theorem \ref{th:structRPm}(\ref{item:RPma}) need not be unique: indeed, we can simply take $X$ to be a two-element set with multiplication $x\cdot y=x$ for all choices of $x,y\in X$. It is already in the form $(X,\cdot_f)$ for $f=\id_X$, but can also be partitioned into two singletons with all $f_i^j$'s taken to be identities.

  However, among the various partitions, one can distinguish two which do happen to be unique: a coarsest and a finest one; see Theorem \ref{th:uniqpart}.
  \end{enumerate}
\end{remarks}


\begin{definition}\label{def:coarsefine}
  Let $(X,\cdot)$ be a right P\l onka magma.
  \begin{enumerate}[(1)]
  \item A structure $(X_i,\ f_i^j)$ as in Theorem \ref{th:structRPm}(\ref{item:RPma}) of $(X,\cdot)$ is a {\it P\l onka partition}.

  \item Given two P\l onka partitions
    \begin{equation}\label{eq:xny}
      \cX:=(X_i,\ f_{i}^j)
      \quad\text{and}\quad
      \cY:=(Y_{\alpha},\ f_{\alpha}^{\beta})
    \end{equation}
    of a right P\l onka magma we say that $\cX$ is {\it coarser} than $\cY$, or that $\cY$ is {\it finer} than $\cX$ (written $\cX\preceq \cY$ or $\cY\succeq\cX$) if every $Y_{\alpha}$ is contained in some $X_i$ and
    \begin{equation*}
      f_{\alpha}^{\beta} = f_i^j|_{Y_{\alpha}}
      \quad\text{for}\quad
      Y_{\alpha}\subseteq X_i,\ Y_{\beta}\subseteq X_j.
    \end{equation*}
  \end{enumerate}
\end{definition}
We also introduce some language that builds on \cite[Theorem 3.5]{acm1}:

\begin{definition}\label{def:connfunc}

  \begin{enumerate}[(1)]

  \item\label{item:con3} A family $\cF$ of functions $X\to X$ is \emph{connected} if there is no non-trivial partition $X = \coprod_{i\in I} \, X_i$ (i.e. one in at least 2 non-empty parts) such that $f(X_i) \subseteq X_i$, for all $f\in \cF$ and $i\in I$. This is a variant of the notion of connectedness introduced in \cite[\S 1, p.323]{MR214529}.

  \item\label{item:conncomp} The {\it connected components} of a family $\cF$ (or the {\it $\cF$-connected components}) are the maximal $\cF$-invariant subsets of $X$ on which the restriction of $\cF$ is connected.

    For every family $\cF$ there is a partition of $X$ into $\cF$-connected components.

  \item\label{item:con1} A connected singleton $\{f: X \to X \}$ is called a \emph{connected} map.
  
  \item\label{item:bi-conn} A map $R: X\times X \to X\times X$ is called \emph{bi-connected} if there is no non-trivial partition $X = \coprod_{i\in I} \, X_i$ such that $R (X_i \times X_i) \subseteq X_i \times X_i$, for all $i\in I$.

  \end{enumerate}
\end{definition}

\begin{remarks}\label{re:biconevsinde}
  \begin{enumerate}[(1)]

  \item\label{netrivial} Passing to the equivalent language of bi-magmas, a map $R : X\times X \to X\times X$ is bi-connected if and only if the associated bi-magma
    $(X, \, \cdot, \, \ast)$ cannot be decomposed as an arbitrary non-trivial partition consisting of sub-bi-magmas. We point out that any bi-connected map $R: X\times X \to X\times X$ is indecomposable in the sense of \cite[Definition 2.5]{etingof}.

  \item It is easy to show that a map $f:X \to X$ is connected in the sense of Definition \ref{def:connfunc}(\ref{item:con1}) if and only if there is no non-trivial $f$-invariant partition of $X$, i.e. $X = Y\sqcup Z$ such that $f(Y) \sqsubseteq Y$ and $f(Z) \sqsubseteq Z$.

    For a positive integer $n$, the number of all conjugation classes of all connected self-maps $f: \{1, \cdots, n\} \to \{1, \cdots, n\}$ is called the \emph{Harary number} of $n$ \cite[Definition 3.6 and Remarks 3.7]{acm1} and is denoted by $\mathfrak{c} (n)$.

  \item A pair $(f, \, g)$ of self-functions $f$, $g : X \to X$ is connected in the sense of Definition \ref{def:connfunc}(\ref{item:con3}) if and only if $R: = f \times g$ is bi-connected.
  \end{enumerate}
\end{remarks}

\begin{theorem}\label{th:uniqpart}
  Let $(X,\cdot)$ be a right P\l onka magma.

  \begin{enumerate}[(1)]
  \item The P\l onka partition constructed in the proof of Theorem \ref{th:structRPm} is the coarsest, and is unique.

  \item There is also a finest P\l onka partition, uniquely characterized as one for which the families
    \begin{equation*}
      \cF_i:=\left\{X_i\xrightarrow{\quad f_i^j\quad} X_i\right\}_{j\in I}
    \end{equation*}
    are all connected.
  \end{enumerate}
\end{theorem}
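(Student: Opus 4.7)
My plan proceeds in three stages. For part (1), the core claim is that every P\l onka partition $\cY=(Y_\alpha,g_\alpha^\beta)$ refines the partition $(X_i,f_i^j)$ constructed in the proof of \Cref{th:structRPm}. Recall from that construction that the $X_i$'s are the equivalence classes of the relation $\approx$ defined by $a\approx b$ precisely when $x\cdot a = x\cdot b$ for all $x\in X$. So for any two elements $a,b$ of a common block $Y_\alpha$ of $\cY$ and any $x\in Y_\gamma$, formula \Cref{eq:multRPm} applied to $\cY$ gives $x\cdot a = g_\gamma^\alpha(x) = x\cdot b$, hence $a\approx b$, and so $Y_\alpha\subseteq X_i$ for some $i$; the restriction identity in \Cref{def:coarsefine} is then automatic from the formulas for multiplication. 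Since $\preceq$ is a partial order on P\l onka partitions, the coarsest is unique whenever it exists, and the above shows $(X_i, f_i^j)$ to be it.

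For the existence half of (2), I would refine the coarsest $\cX=(X_i,f_i^j)$ by decomposing each $X_i$ into its $\cF_i$-connected components $X_{i,\alpha}$ in the sense of \Cref{def:connfunc} \Cref{item:conncomp}. Each $X_{i,\alpha}$ is $\cF_i$-invariant, so each $f_i^j$ restricts to a self-map $f_i^j|_{X_{i,\alpha}}\colon X_{i,\alpha}\to X_{i,\alpha}$, and the commutation relations \Cref{comfunct} pass to these restrictions. \Cref{th:structRPm} then furnishes a P\l onka partition $\bigl(X_{i,\alpha},\ f_i^j|_{X_{i,\alpha}}\bigr)$ whose associated multiplication, via \Cref{eq:multRPm}, reproduces the original on $X$. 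By construction, the indexing family $\{f_i^j|_{X_{i,\alpha}}\}_j$ on each block equals $\cF_i|_{X_{i,\alpha}}$, which is connected because $X_{i,\alpha}$ was chosen as a $\cF_i$-connected component.

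For uniqueness and characterization, let $\cY=(Y_\alpha,g_\alpha^\beta)$ be any P\l onka partition with $\{g_\alpha^\beta\}_\beta$ connected for every $\alpha$. By the first stage, $\cY\succeq\cX$, so each $Y_\alpha$ lies in some $X_{i(\alpha)}$ and $g_\alpha^\beta = f_{i(\alpha)}^{i(\beta)}|_{Y_\alpha}$; as $\beta$ varies, $i(\beta)$ ranges over the entire $\cX$-index set (every non-empty $X_j$ meets some $Y_\beta$), so the family $\{g_\alpha^\beta\}_\beta$ agrees with $\cF_{i(\alpha)}|_{Y_\alpha}$. Hence $Y_\alpha$ is $\cF_{i(\alpha)}$-invariant and $\cF_{i(\alpha)}$-connected. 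The $\cF_{i(\alpha)}$-connected component $C\subseteq X_{i(\alpha)}$ containing $Y_\alpha$ is a disjoint union of $\cY$-blocks, each $\cF_{i(\alpha)}$-invariant; if $C$ held more than one of them, $C$ itself would split into non-empty invariant pieces, contradicting its connectedness. So $Y_\alpha=C$, and $\cY$ coincides with the partition built in the previous paragraph. The main obstacle I expect is exactly this last step---converting connectedness of $\{g_\alpha^\beta\}_\beta$ into maximality of $Y_\alpha$ among $\cF_{i(\alpha)}$-invariant subsets---but once one exploits that the $Y_\alpha$'s inside $X_{i(\alpha)}$ already form a partition, the decomposition argument goes through cleanly.
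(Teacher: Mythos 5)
Your proof is correct. Part (1) and the existence half of part (2) run essentially as in the paper: the coarsest partition is identified via the relation $x\cdot a=x\cdot b$ for all $x$, and the finest one is built by splitting each block into the connected components of the family $\cF_i$. Where you genuinely diverge is in the uniqueness/characterization of the finest partition. The paper argues structurally: it first shows that the connected P\l onka partitions are exactly the maximal elements of the poset $\bigl(\text{P\l onka partitions},\preceq\bigr)$, and then gets uniqueness of the maximal element from the observation that this poset is \emph{filtered} --- any two P\l onka partitions admit a common refinement whose blocks are the pairwise intersections $X_i\cap Y_\alpha$. You instead identify the blocks of an arbitrary connected partition $\cY$ directly: each $Y_\alpha$ is $\cF_{i(\alpha)}$-invariant and connected, hence sits inside a single $\cF_{i(\alpha)}$-connected component $C$, and since $C$ decomposes as a disjoint union of invariant $\cY$-blocks, connectedness of $C$ forces $Y_\alpha=C$. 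Your route is more elementary and self-contained (it never needs the common-refinement construction), at the price of the slightly more delicate combinatorial step of showing each component is a union of blocks; the paper's route is shorter at this point and records the extra structural fact that the poset of P\l onka partitions is filtered, which is of independent interest. Both arguments are sound.
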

\begin{proof}
  In both cases, uniqueness follows immediately from the characterization as the coarsest/finest P\l onka partition: `finer than' is plainly an order, and a partially ordered set has at most one smallest and/or largest element.

  \begin{enumerate}[(1)]
  \item In {\it any} P\l onka partition $(X_i, f_i^j)$ elements $a,b\in X_i$ of the same class have the property that $x\cdot a = x\cdot b$ for all $x\in X$. Since this is precisely the equivalence relation that induces the partition in the proof of Theorem \ref{th:structRPm}, that partition must be the coarsest achievable.

  \item For an arbitrary P\l onka partition, any refinement thereof (i.e. passage to a finer one) will decompose some $X_i$ into parts left invariant by the family $\{f_i^j\}_j$. This means that a partition for which said families are all connected cannot be refined {\it strictly}, so that such a P\l onka partition (henceforth also referred to as {\it connected}) is maximal with respect to the `$\preceq$' order.

    Note, next, that every P\l onka partition $(X_i,f_i^j)$ is refinable into a connected one: indeed, one can simply define the smaller parts $X_{i,\alpha}$ to be the respective connected components of $\{f_i^j\}_j$, with $f_{i,\alpha}^{j,\beta}$ obtained by restricting $f_i^j$ in the obvious fashion.

    This, so far, says that the connected P\l onka partitions are precisely the maximal elements with respect to $\preceq$. We still have to argue that there is in fact only {\it one} such partition. This follows from the more general remark that the poset
    \begin{equation*}
      \left(\text{P\l onka partitions},\preceq\right)
    \end{equation*}
    is {\it filtered} \cite[\S IX.1]{mcl}: every two elements \eqref{eq:xny} have a common refinement whose underlying set partition consists simply of the pairwise intersections $X_i\cap Y_{\alpha}$.   This concludes the proof.

  \end{enumerate}
\end{proof}
The uniqueness of the two extreme P\l onka partitions of Theorem \ref{th:uniqpart} also makes them into complete invariants for the right-P\l onka-magma structure:

\begin{theorem}\label{th:isopart}
  For two right P\l onka magmas $(X,\cdot)$ and $(Y,*)$ the following conditions are equivalent:
  \begin{enumerate}[(1)]
  \item\label{item:rpiso} $(X,\cdot)$ and $(Y,*)$ are isomorphic.

  \item\label{item:rpisocoarse} There is a bijection
    \begin{equation*}
      \{i\}\xrightarrow[\cong]{\quad\theta\quad}\{\alpha\}
    \end{equation*}
    between the indexing sets of the respective coarsest P\l onka partitions of $(X,\cdot)$ and $(Y,*)$ and isomorphisms
    \begin{equation*}
      \left\{f_i^j\right\}_j
      \xrightarrow[\cong]{\quad}
      \left\{g_{\theta(i)}^{\theta(j)}\right\}_{\theta(j)}
    \end{equation*}
    as families of self-functions.

  \item\label{item:rpisofine} Same as (\ref{item:rpisocoarse}), for the {\it finest} P\l onka partitions instead.
  \end{enumerate}
\end{theorem}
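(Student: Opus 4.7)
The natural strategy is to pivot through \Cref{item:rpiso} and establish both \Cref{item:rpiso}$\Leftrightarrow$\Cref{item:rpisocoarse} and \Cref{item:rpiso}$\Leftrightarrow$\Cref{item:rpisofine}. The key input is the uniqueness half of \Cref{th:uniqpart}: since each of the coarsest and finest P\l onka partitions is canonically attached to the magma, any magma isomorphism must transport one onto the other.

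For the forward implication \Cref{item:rpiso}$\Rightarrow$\Cref{item:rpisocoarse}, suppose $\sigma\colon(X,\cdot)\xrightarrow{\cong}(Y,*)$. The equivalence relation $a\approx b\Leftrightarrow x\cdot a=x\cdot b\ \forall x$ used to build the coarsest partition in the proof of \Cref{th:structRPm} is phrased purely in magma language, so $\sigma$ carries the $\approx_X$-classes bijectively onto the $\approx_Y$-classes; this gives the bijection $\theta$ of index sets together with bijections $\sigma|_{X_i}\colon X_i\to Y_{\theta(i)}$. Picking representatives and applying the defining formula \Cref{eq:multRPm} for both magmas shows
\begin{equation*}
\sigma|_{X_i}\circ f_i^j = g_{\theta(i)}^{\theta(j)}\circ \sigma|_{X_i}\qquad \forall i,j\in I,
\end{equation*}
which is exactly the required isomorphism of families. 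The same argument, applied to the canonical finest partition of \Cref{th:uniqpart}(2), proves \Cref{item:rpiso}$\Rightarrow$\Cref{item:rpisofine}; here one uses that $\sigma$ preserves connectedness of $\{f_i^j\}_j$ because it intertwines the whole family with $\{g_{\theta(i)}^{\theta(j)}\}_{\theta(j)}$.

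For the converse implications \Cref{item:rpisocoarse}$\Rightarrow$\Cref{item:rpiso} and \Cref{item:rpisofine}$\Rightarrow$\Cref{item:rpiso}, assemble the ingredient bijections $\phi_i\colon X_i\to Y_{\theta(i)}$ (which by hypothesis satisfy $\phi_i\circ f_i^j=g_{\theta(i)}^{\theta(j)}\circ\phi_i$) into the map
\begin{equation*}
\sigma\colon X=\coprod_{i\in I}X_i\longrightarrow \coprod_{\alpha}Y_\alpha=Y,\qquad \sigma|_{X_i}=\phi_i.
\end{equation*}
This is a bijection since $\theta$ is, and a direct check against \Cref{eq:multRPm} gives, for $x_i\in X_i$ and $x_j\in X_j$,
\begin{equation*}
\sigma(x_i\cdot x_j)=\phi_i(f_i^j(x_i))=g_{\theta(i)}^{\theta(j)}(\phi_i(x_i))=\sigma(x_i)*\sigma(x_j),
\end{equation*}
so $\sigma$ is a magma isomorphism. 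The same assembly argument works verbatim starting from the finest P\l onka partitions.

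The only real subtlety—and the step I would verify carefully—is the forward direction: one must argue that a magma isomorphism genuinely sends the \emph{coarsest} (resp.\ \emph{finest}) P\l onka partition of $X$ to the coarsest (resp.\ finest) one of $Y$. For the coarsest partition this is immediate since the defining congruence $\approx$ is intrinsic to the magma operation; for the finest partition it is here that we invoke \Cref{th:uniqpart}(2), together with the remark that $\sigma$ preserves connected components of families because connectedness of $\{f_i^j\}_j$ is an intrinsic property of the family up to conjugation. Everything else is a bookkeeping matter.
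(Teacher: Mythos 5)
Your proposal is correct and follows essentially the same route as the paper's (much terser) proof: the converse directions are the direct assembly of the piecewise bijections against \Cref{eq:multRPm}, and the forward directions rest on the observation that a magma isomorphism transports P\l onka partitions while preserving the coarser/finer order, so by the uniqueness statements of \Cref{th:uniqpart} it must carry coarsest to coarsest and finest to finest. Your added detail on why the pushforward of the finest (connected) partition is again the finest is a faithful elaboration of what the paper leaves implicit.
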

\begin{proof}
  This follows easily by observing that isomorphisms as in either (\ref{item:rpisocoarse}) or (\ref{item:rpisofine}) will induce isomorphisms of magmas given by \eqref{eq:multRPm}. Conversely, an isomorphism of magmas induces one on the posets of P\l onka partitions, hence also respective isomorphisms between the smallest and largest elements of those posets.
\end{proof}
Recall that a magma $(X, \cdot)$ is called \emph{right} (resp. \emph{left}) \emph{ideal-simple} if it has no proper non-empty right (resp. left) ideals, and \emph{ideal-simple} if it has no proper non-empty two-sided ideals.

\begin{example}\label{ex:cyclelX}
Let $f: X \to X$ be a map and consider the magma $(X, \, \cdot_f)$ as defined in Example \ref{exs:exerkm}(\ref{generic}). Then,
$I \subseteq X$ is a left ideal of $(X, \, \cdot_f)$ if and only if $I \supseteq f(X)$. Furthermore, $I$ is a two-sided ideal of
$(X, \, \cdot_f)$ if and only if $I$ is a left ideal. Thus, $(X, \, \cdot_f)$ is a (left) ideal-simple magma if and only if $f: X \to X$ is a surjection.\\
On the other hand, $I \subseteq X$ is a right ideal of $(X, \, \cdot_f)$ if and only if $I$ is $f$-invariant, i.e. $f(I) \subseteq I$.
Thus, we obtain that $(X, \, \cdot_f)$ is a right ideal-simple magma if and only if $X$ is the only $f$-invariant subset of $X$, that is $f$ is a
 $|X|$-cycle (or an incompressible map) as defined by Definition \ref{def:simple}.
\end{example}
As a first application of Theorem \ref{th:structRPm} the structure of the (left) ideal-simple P{\l}onka magmas now follows:

\begin{theorem}\label{th:lsimp}
  For a right P{\l}onka magma $(X, \, \cdot)$ decomposed as in Theorem \ref{th:structRPm}(\ref{item:RPma}) the following conditions are then equivalent:
  \begin{enumerate}[(1)]
  \item\label{item:lsimp} $(X, \, \cdot)$ is left ideal-simple.
  \item\label{item:simp} $(X, \, \cdot)$ is ideal-simple.
  \item\label{item:tot} $(X, \, \cdot)$ is {\it total}, in the sense that $X^2=X$.
  \item\label{item:jonto} For every $i\in I$ the functions $X_i\xrightarrow{f_i^j}X_i$, $j\in I$ are {\it jointly onto}, in the sense that their images cover $X_i$.
  \end{enumerate}
\end{theorem}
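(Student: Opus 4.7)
The plan is to run the cycle of implications
\Cref{item:lsimp}$\,\Rightarrow\,$\Cref{item:simp}$\,\Rightarrow\,$\Cref{item:tot}$\,\Leftrightarrow\,$\Cref{item:jonto}$\,\Rightarrow\,$\Cref{item:lsimp}.
The first arrow is immediate, since any non-empty two-sided ideal is automatically a non-empty left ideal. The second is almost as easy: $X^2$ is trivially a two-sided ideal as soon as it is non-empty, and $X\ne\emptyset$ forces $X^2\ne\emptyset$ (the case $X=\emptyset$ being vacuous), so simplicity collapses it onto $X$.

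For the equivalence \Cref{item:tot}$\,\Leftrightarrow\,$\Cref{item:jonto} I would simply unpack the multiplication formula \Cref{eq:multRPm}:
\begin{equation*}
  X^2 \;=\; \bigcup_{i,j\in I} f_i^j(X_i) \;=\; \bigcup_{i\in I}\Bigl(\bigcup_{j\in I} f_i^j(X_i)\Bigr).
\end{equation*}
Because each $f_i^j(X_i)\subseteq X_i$ and the blocks $X_i$ partition $X$, the equality $X^2=X$ holds if and only if for every $i\in I$ the inner union already exhausts $X_i$, which is exactly condition \Cref{item:jonto}.

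The substantive step is \Cref{item:jonto}$\,\Rightarrow\,$\Cref{item:lsimp}. I would take a non-empty left ideal $J\subseteq X$, fix an element $a\in J$ lying in some block $X_{j_0}$, and compute $x_i\cdot a=f_i^{j_0}(x_i)$ for any $i\in I$ and $x_i\in X_i$ using \Cref{eq:multRPm}; the left-ideal property then yields
\begin{equation*}
  f_i^{j_0}(X_i)\;\subseteq\; J\cap X_i,\qquad \forall\, i\in I.
\end{equation*}
The pivotal observation is that the left-hand side is automatically non-empty, since the blocks $X_i$ are. Hence $J$ meets \emph{every} block of the P\l onka partition. Choosing $b_i\in J\cap X_i$ for each $i$ and applying the left-ideal property once more gives $f_j^i(X_j)=X_j\cdot b_i\subseteq J$ for every $i,j\in I$, and the jointly-onto hypothesis \Cref{item:jonto} then produces $X_j=\bigcup_{i\in I} f_j^i(X_j)\subseteq J$ for every $j$, so $J=X$.

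The only non-routine step is this last implication; the rest is essentially bookkeeping with the partition. The key trick to get past the obstacle is the two-stage use of the left-ideal property: first, a single element $a$ is used (via non-emptiness of $f_i^{j_0}(X_i)$) to spread $J$ into every block, and only then is the jointly-onto hypothesis invoked to fill each block.
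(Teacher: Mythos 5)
Your proposal is correct and follows essentially the same route as the paper: the same implication cycle, the same unpacking of $X^2\cap X_i=\bigcup_j f_i^j(X_i)$ for \Cref{item:tot}$\,\Leftrightarrow\,$\Cref{item:jonto}, and the same two-stage argument for \Cref{item:jonto}$\,\Rightarrow\,$\Cref{item:lsimp} (first using non-emptiness of $f_i^{j_0}(X_i)\subseteq J\cap X_i$ to see that the ideal meets every block, then invoking joint surjectivity to fill each block).
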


\begin{proof}
  \begin{enumerate}[]

  \item {\bf (\ref{item:lsimp}) $\Longrightarrow$ (\ref{item:simp}) $\Longrightarrow$ (\ref{item:tot}).} This is obvious: $X^2$ is a two-sided ideal (non-empty if $X$ is), and every two-sided ideal is also a left ideal.

  \item {\bf (\ref{item:tot}) $\Longleftrightarrow$ (\ref{item:jonto}).} The multiplication defined by \eqref{eq:multRPm} makes it clear that for every $i\in I$ we have
    \begin{equation*}
      X^2\cap X_i = \bigcup_{j\in I}\mathrm{Im}~f_i^j,
    \end{equation*}
    so indeed the right-hand side is all of $X_i$ for every $i\in I$ precisely when
    \begin{equation*}
      X^2\supseteq X_i,\ \forall i\in I
      \iff
      X^2=X.
    \end{equation*}

  \item {\bf (\ref{item:jonto}) $\Longrightarrow$ (\ref{item:lsimp}).} Let $\emptyset\ne Y\subseteq X$ be a left ideal. By \eqref{eq:multRPm} left multiplication by elements in $X_i$ produces elements in $X_i$, so
    \begin{equation*}
      Y\cap X_i\ne \emptyset,\ \forall i\in I.
    \end{equation*}
    Now, an arbitrary $x_i\in X_i$ lies in the image of some $f_i^j$, $j\in I$ by assumption, and we have just observed that there is some $x_j\in Y\cap X_j$. We thus have
    \begin{equation*}
      x_i\in \mathrm{Im}~f_i^j = X_i\cdot \{x_j\}\subseteq X_i\cdot Y\subseteq Y
    \end{equation*}
    by \eqref{eq:multRPm}, and we are done. This completes the implication circle, and hence the proof.
  \end{enumerate}
\end{proof}

\begin{remarks}
  Note that the condition of Theorem \ref{th:lsimp}(\ref{item:jonto}) is strictly weaker than the requirement that at least one $f_i^j$, $j\in I$ be onto. To this end, in the decomposition of Theorem \ref{th:structRPm}(\ref{item:RPma}) set $I:=\bZ$, $X_i=\bZ$ for some fixed $i$, and furthermore
  \begin{equation*}
    X_i=\bZ\ni x\xmapsto{\quad f_i^j\quad}\min(x,j)\in \bZ=X_i.
  \end{equation*}
  Clearly, no {\it individual} $f_i^j$ is onto, but the entire family $\{f_i^j\}_{j\in \bZ}$ is {\it jointly} onto in the sense of Theorem \ref{th:lsimp}(\ref{item:jonto}).
\end{remarks}
As a second application of Theorem \ref{th:structRPm}(\ref{item:RPma}) we classify the right ideal-simple P{\l}onka magmas. By contrast to Theorem \ref{th:lsimp}, these turn out
to be finite and unique up to isomorphism given the cardinality $|X|$.

\begin{theorem}\label{th:simplestr}
  A right P{\l}onka magma $(X, \cdot)$ is right ideal-simple if and only if $X$ is a finite set and there exists an $|X|$-cycle $f: X \to X$ such that $(X, \cdot) = (X, \, \cdot_f)$, as given in Example \ref{exs:exerkm}(\ref{generic}).

  Up to isomorphism, there is only one right ideal-simple right P{\l}onka magma on a finite set $X$ with $|X| = n$, namely the one associated to a cycle of length $n$ in the permutation group $S_n$.
\end{theorem}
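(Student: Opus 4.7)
The strategy is to combine the structure theorem \Cref{th:structRPm} with \Cref{cyclelX} and \Cref{le:cycles}, essentially reducing the problem to the single-component case.

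First I handle the easy direction. Assume $(X,\cdot)=(X,\cdot_f)$ for an $|X|$-cycle $f$. By \Cref{cyclelX}, the right ideals of $(X,\cdot_f)$ are precisely the $f$-invariant non-empty subsets of $X$, and because $f$ is an $|X|$-cycle (equivalently, incompressible in the sense of \Cref{def:simple}\Cref{item:cycledef}), the only such subset is $X$ itself. Hence $(X,\cdot_f)$ is right simple.

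For the converse, let $(X,\cdot)$ be right simple and decompose it via \Cref{th:structRPm}\Cref{item:RPma} as $X=\coprod_{i\in I}X_i$ with functions $f_i^j\colon X_i\to X_i$. The key observation is that \emph{each $X_i$ is a non-empty right ideal}: for any $x_i\in X_i$ and $x_j\in X_j$ the product $x_i\cdot x_j=f_i^j(x_i)$ lies in $X_i$, so $X_i\cdot X\subseteq X_i$. Right simplicity then forces $|I|=1$, which, by \Cref{res:pl1000}\Cref{descinde}, is equivalent to $(X,\cdot)=(X,\cdot_f)$ for some self-map $f\colon X\to X$. Invoking \Cref{cyclelX} one more time, right simplicity of $(X,\cdot_f)$ amounts to $X$ being the only $f$-invariant subset of $X$, i.e.\ $\{f\}$ being incompressible. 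By \Cref{le:cycles} this forces $X$ to be finite and $f$ to be a cycle of length $|X|$.

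For the uniqueness clause, suppose $(X,\cdot_f)$ and $(X,\cdot_g)$ are two right simple right P{\l}onka magmas on a set with $|X|=n$. By what we just proved, both $f$ and $g$ are $n$-cycles in $S_n$, hence conjugate in $S_n$: there exists $\sigma\in S_n$ with $g=\sigma\circ f\circ\sigma^{-1}$. By the conjugation criterion recorded in \Cref{exerkm}\Cref{generic}, $\sigma$ is an isomorphism $(X,\cdot_f)\xrightarrow{\cong}(X,\cdot_g)$. This gives the single isomorphism class on each finite $X$, completing the proof. The main conceptual step is identifying the $X_i$ as right ideals, which forces the P{\l}onka decomposition to degenerate; everything else is a direct appeal to prior results.
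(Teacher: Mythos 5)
Your proposal is correct and follows essentially the same route as the paper's own proof: both directions rest on \Cref{cyclelX} and \Cref{le:cycles}, the converse proceeds by observing that each block $X_i$ of the P{\l}onka partition from \Cref{th:structRPm} is a right ideal (forcing $|I|=1$ and hence $(X,\cdot)=(X,\cdot_f)$ via \Cref{res:pl1000}\Cref{descinde}), and uniqueness follows from the conjugacy of length-$n$ cycles in $S_n$. No gaps.
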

\begin{proof}
  We have seen in Example \ref{ex:cyclelX} that, for a function $f: X \to X$, the magma $(X, \, \cdot_f)$ is right ideal-simple if and only if $f: X \to X$ is an $|X|$-cycle. By Lemma \ref{le:cycles} this happens if and only if $X$ is finite and $f$ is a translation by a generator of the finite cyclic group structure on $X$.

  Now, let $(X, \cdot)$ be a right P{\l}onka magma that is right ideal-simple. Then it has a decomposition $X = \coprod_{i\in I} \, X_i$ as given by
  Theorem \ref{th:structRPm}(\ref{item:RPma}). Since the multiplication is given by \eqref{eq:multRPm} we obtain that each component $X_i$ of the partition is a right ideal of $(X, \cdot)$. As $X$ is right ideal-simple, we obtain that $|I| = 1$ and hence using the last observation of Remark \ref{res:pl1000}(\ref{descinde}), there exists a self-function $f: X \to X$ such that $(X, \, \cdot) = (X, \, \cdot_f)$. This proves the first statement. Moreover, we note that in this case $(X, \, \cdot_f)$ is also left ideal-simple.

  Finally, we observe that for two maps $f$, $g: X \to X$, the magmas $(X, \, \cdot_f)$ and $(X, \, \cdot_g)$ are isomorphic if and only if $f$ and $g$ are conjugate, i.e. there exists $\sigma \in \Sigma_X$, a permutation on $X$, such that $g = \sigma \circ f \circ \sigma^{-1}$. This proves the last statement since any two cycles of length $n$ from the permutation group $S_n$ are conjugate.
\end{proof}
Next we give the structure theorem of P{\l}onka bi-magmas: it shows that any
P{\l}onka bi-magma can be decomposed as a partition of Lyubashenko's sub-bi-magmas as defined in Example \ref{exs:bmlyb}(\ref{item:LB}).

\begin{theorem}\label{th:strpbimag}
  Let $X$ be a set. Giving  a P{\l}onka bi-magma structure $(X, \, \cdot, \, \ast)$ on $X$ is equivalent to specifying:

  \begin{itemize}

  \item A  partition $(X_i)_{i\in I}$ on $X$, i.e. $X = \coprod_{i\in I} \, X_i$;

  \item A pair of tuples $\bigl(f_i^j \bigl)_{(i,j) \in I\times I}$ and $\bigl(g_i^j \bigl)_{(i,j) \in I\times I}$
    of functions $f_i^j : X_i \to X_i$ and $g_i^j : X_i \to X_i$ such that for any $i$, $j$, $k\in I$:
    \begin{equation}\label{eq:comfunctddd}
      f_i^j \circ f_i^k = f_i^k \circ f_i^j, \qquad g_i^j \circ g_i^k = g_i^k \circ g_i^j, \qquad g_i^j \circ f_i^k = f_i^k \circ g_i^j.
    \end{equation}
  \end{itemize}

  The P{\l}onka bi-magma $(X, \, \cdot, \, \ast)$ associated to
  $\bigl((X_i)_{i\in I}, \,  \bigl(f_i^j \bigl)_{(i,j) \in I\times I}, \, \bigl(g_i^j \bigl)_{(i,j) \in I\times I} \bigl)$ in the above correspondence is given by:
  \begin{equation}\label{eq:multRPmdd}
    x_i \cdot x_j := f_i^j (x_i), \qquad   x_i \ast x_j := g_j^i (x_j)
  \end{equation}
  for all $x_i \in X_i$, $x_j \in X_j$ and $i$, $j\in I$. Furthermore, the associated P{\l}onka bi-magma is unitary if and only if
  $f_i^j$ and $g_i^j$ are bijective functions inverse to each other, for all $i$, $j\in I$.
\end{theorem}
\begin{proof}
  Assume first that we are given data $\bigl((X_i)_{i\in I}, \, \bigl(f_i^j \bigl)_{(i,j) \in I\times I}, \, \bigl(g_i^j \bigl)_{(i,j) \in I\times I} \bigl)$ satisfying \eqref{eq:comfunctddd}. Then it is a routine matter to check that the set $X = \coprod_{i\in I} \, X_i$ with the multiplications defined by \eqref{eq:multRPmdd} is a P{\l}onka bi-magma.

  As for the converse we adapt the proof of Theorem \ref{th:structRPm} as follows. Let $(X, \, \cdot, \, \ast)$ be a P{\l}onka bi-magma. On the set $X$ we define a relation $\rho$ as follows: $a \rho b$ if and only if $x \cdot a = x \cdot b$ and $a \ast x = b \ast x$, for all $x\in X$. Then, $\rho$ is an equivalence relation on $X$. Moreover, using the right reduction law of $\cdot$, the left reduction law of $\ast$ and the compatibility conditions \eqref{eq:KM3b} and \eqref{eq:KM3c} we can easily prove that $\rho$ is a congruence on $(X, \, \cdot, \, \ast)$, i.e. $\rho$ is compatible with both multiplications $\cdot$ and $\ast$ such that we can construct the quotient bi-magma $X/ \rho$.

  Let $(a_i)_{i\in I} \subseteq X$ be a system of representatives of the congruence $\rho$. Then the set of all congruence classes $\{X_i := \overline{a_i} \, | \, i\in I \}$ is a partition on $X$ and define: $f_i^j (x_i) := x_i \cdot x_j \in X_i$ and $g_i^j (x_i) := x_j \ast x_i \in X_i$, for all $x_i \in X_i$, $x_j \in X_j$ and $i$, $j\in I$. The rest of the proof follows from compatibility conditions defining the P{\l}onka bi-magma as given in Definition \ref{def:kbimag}.

  We also note that the quotient bi-magma $(X/ \rho, \, \cdot, \, \ast)$ is given such that $(X/ \rho, \, \cdot)$ is the left-zero semigroup and $(X/ \rho, \, \ast)$ is the right-zero semigroup, i.e. $\overline{a_i} \cdot \overline{a_j} = \overline{a_i}$ and $\overline{a_i} \ast \overline{a_j} = \overline{a_j}$, for all $i$, $j\in I$.

  Finally, we can easily check that the unitary condition \eqref{eq:uniKbimag} comes down to $f_j^i \circ g_j^i = {\rm Id}_{X_j}$, for all $i$, $j\in I$. Hence, using the last compatibility of \eqref{eq:comfunctddd}, this reduces to the fact that $f_i^j$ and $g_i^j$ are bijective functions inverse to each other, for all $i$, $j\in I$ and this finishes the proof.
\end{proof}
By analogy to Definition \ref{def:coarsefine} we introduce:

\begin{definition}\label{def:biplonka}
  Let $(X,\cdot,*)$ be a P\l onka bi-magma.
  \begin{enumerate}[(1)]
  \item A tuple $(X_i,\ f_i^j,\ g_i^j)$ as in Theorem \ref{th:strpbimag} is a {\it bi-P\l onka partition}.

  \item Given two bi-P\l onka partitions
    \begin{equation*}
      \cX:=(X_i,\ f_{i}^j,\ g_i^j)
      \quad\text{and}\quad
      \cY:=(Y_{\alpha},\ f_{\alpha}^{\beta},\ g_{\alpha}^{\beta})
    \end{equation*}
    of $(X,\cdot,*)$ we say that $\cX$ is {\it coarser} than $\cY$, or that $\cY$ is {\it finer} than $\cX$ (written $\cX\preceq \cY$ or $\cY\succeq\cX$) if every $Y_{\alpha}$ is contained in some $X_i$ and
    \begin{equation*}
      f_{\alpha}^{\beta} = f_i^j|_{Y_{\alpha}}
      ,\quad
      g_{\alpha}^{\beta} = g_i^j|_{Y_{\alpha}}
      \quad\text{for}\quad
      Y_{\alpha}\subseteq X_i,\ Y_{\beta}\subseteq X_j.
    \end{equation*}
  \end{enumerate}
\end{definition}
The coarseness/fineness discussion carries over from right P\l onka magmas to P\l onka bi-magmas with only minimal effort. We therefore omit the proofs of the following straightforward analogues of Theorems \ref{th:uniqpart} and \ref{th:isopart}:

\begin{theorem}\label{th:biuniqpart}
  Let $(X,\cdot,*)$ be a P\l onka bi-magma. Then:

  \begin{enumerate}[(1)]
  \item The bi-P\l onka partition constructed in the proof of Theorem \ref{th:strpbimag} is the coarsest, and is unique.

  \item There is also a finest bi-P\l onka partition, uniquely characterized as one for which the families
    \begin{equation*}
      \cF_i:=\left\{X_i\xrightarrow{\quad f_i^j,\ g_i^j\quad} X_i\right\}_{j\in I}
    \end{equation*}
    are all connected.  \qedhere
  \end{enumerate}
\end{theorem}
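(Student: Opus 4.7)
The plan is to mirror \Cref{th:uniqpart} step for step, with routine modifications to accommodate the second binary operation. Since `$\preceq$' (finer-than) is a partial order on the set of bi-P\l onka partitions of a fixed $(X,\cdot,\ast)$, a poset contains at most one smallest and at most one largest element, so in both parts uniqueness is immediate from the characterization as the coarsest, respectively finest, bi-P\l onka partition.

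For part (1), I would argue that any bi-P\l onka partition $(X_i, f_i^j, g_i^j)$ refines the one constructed in the proof of \Cref{th:strpbimag}. Concretely, if $a,b\in X_i$ lie in a common part and $x\in X_k$ is arbitrary, then the formulas \Cref{multRPmdd} give
\begin{equation*}
  x\cdot a = f_k^i(x) = x\cdot b \quad\text{and}\quad a\ast x = g_k^i(x) = b\ast x,
\end{equation*}
so $a$ and $b$ are congruent modulo the relation $\rho$ appearing in that proof. Hence the $\rho$-partition is coarsest.

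For part (2), the two-step strategy used in \Cref{th:uniqpart} transfers cleanly. First, given any bi-P\l onka partition, I would refine each $X_i$ into the connected components of the family $\{(f_i^j, g_i^j)\}_{j\in I}$ in the sense of \Cref{def:connfunc}; because each $f_i^j$ and $g_i^j$ depends only on the index $j$ (not on the specific element of $X_j$), the restrictions to these components supply the data of a bi-P\l onka partition whose defining commutation relations \Cref{comfunctddd} are inherited by restriction. No strict further refinement is then possible, since any such refinement would break some $X_i$ into subsets not invariant under some $f_i^j$ or $g_i^j$, violating \Cref{multRPmdd}; so \emph{connected} bi-P\l onka partitions are precisely the maximal elements of the $\preceq$-poset. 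Second, I would verify that this poset is filtered: given two bi-P\l onka partitions $\cX=(X_i, f_i^j, g_i^j)$ and $\cY=(Y_\alpha, f_\alpha^\beta, g_\alpha^\beta)$, the common refinement with parts $Z_{(i,\alpha)} := X_i\cap Y_\alpha$ is again a bi-P\l onka partition, the functions being defined by restriction; invariance of $Z_{(i,\alpha)}$ under these restrictions follows from the fact that $f_i^j(z) = f_\alpha^\beta(z)$ for $z\in Z_{(i,\alpha)}$ (both equal $z\cdot w$ for any $w\in Z_{(j,\beta)}$, which is non-empty for some $\beta$ because $X_j=\bigcup_\beta(X_j\cap Y_\beta)$), and similarly for the $g$'s, so the common value lies in both $X_i$ and $Y_\alpha$. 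A filtered poset has at most one maximal element, whence uniqueness of the finest partition.

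The only place that requires any real care is the filteredness step, where one must carefully track that the $\cX$- and $\cY$-computations of $z\cdot w$ and $z\ast w$ force the restricted functions to agree and land back in $X_i\cap Y_\alpha$; everything else amounts to bookkeeping for the commutation relations \Cref{comfunctddd} under restriction to invariant subsets.
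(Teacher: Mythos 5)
Your proposal is correct and is essentially the argument the paper intends: the paper explicitly omits the proof of \Cref{th:biuniqpart}, stating that the coarseness/fineness discussion for right P\l onka magmas (\Cref{th:uniqpart}) carries over with minimal effort, and your write-up is a faithful instantiation of that transfer — the poset-uniqueness observation, the identification of the $\rho$-congruence classes as the coarsest partition via $x\cdot a=f_k^i(x)=x\cdot b$ and $a\ast x=g_k^i(x)=b\ast x$, the refinement into connected components, and the filteredness of the partition poset via pairwise intersections. No gaps.
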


\begin{theorem}\label{th:biisopart}
  For two P\l onka bi-magmas $(X,\cdot,*)$ and $(\overline{X},\overline{\cdot},\overline{*})$ the following conditions are equivalent:
  \begin{enumerate}[(1)]

  \item\label{item:birpiso} $(X,\cdot,*)$ and $(\overline{X},\overline{\cdot},\overline{*})$ are isomorphic.

  \item\label{item:birpisocoarse} There is a bijection
    \begin{equation*}
      \{i\}\xrightarrow[\cong]{\quad\theta\quad}\{\alpha\}
    \end{equation*}
    between the indexing sets of the respective coarsest P\l onka bi-partitions of $(X,\cdot,*)$ and $(\overline{X},\overline{\cdot},\overline{*})$ and isomorphisms between the families
    \begin{equation*}
      \left\{f_i^j,\ g_i^j\right\}
      \quad\text{and}\quad
      \left\{
        \overline{f}_{\theta(i)}^{\theta(j)}
        ,\
        \overline{g}_{\theta(i)}^{\theta(j)}
      \right\}
    \end{equation*}
    as families of self-functions.

  \item\label{item:birpisofine} Same as (\ref{item:birpisocoarse}), for the {\it finest} bi-P\l onka partitions instead.  \qedhere
  \end{enumerate}
\end{theorem}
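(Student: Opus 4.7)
The plan is to mirror the strategy of \Cref{th:isopart} essentially verbatim, relying on the uniqueness statements of \Cref{th:biuniqpart}: both the coarsest and the finest bi-P\l onka partitions are canonically attached to the bi-magma, so any bi-magma isomorphism has to respect them. Both equivalences \Cref{item:birpiso} $\Leftrightarrow$ \Cref{item:birpisocoarse} and \Cref{item:birpiso} $\Leftrightarrow$ \Cref{item:birpisofine} are treated by the same two arguments; the only substantive work is in checking invariance of the privileged partitions under isomorphism.

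For \Cref{item:birpisocoarse} $\Rightarrow$ \Cref{item:birpiso} (and similarly \Cref{item:birpisofine} $\Rightarrow$ \Cref{item:birpiso}), I would assemble the isomorphism from the given data piece by piece: the bijection $\theta$ on indexing sets together with the family isomorphisms $X_i \to \overline{X}_{\theta(i)}$ (intertwining $f_i^j$ with $\overline{f}_{\theta(i)}^{\theta(j)}$ and $g_i^j$ with $\overline{g}_{\theta(i)}^{\theta(j)}$) glue to a bijection $X \to \overline{X}$, and \Cref{multRPmdd} immediately shows this bijection intertwines both multiplications.

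For the reverse implication \Cref{item:birpiso} $\Rightarrow$ \Cref{item:birpisocoarse}, I would observe that the congruence $\rho$ defined in the proof of \Cref{th:strpbimag} is intrinsic to the bi-magma structure: $a\rho b$ iff $x \cdot a = x \cdot b$ and $a \ast x = b \ast x$ for all $x$. Thus any bi-magma isomorphism $\phi \colon (X,\cdot,*) \to (\overline{X},\overline{\cdot},\overline{*})$ sends $\rho$-classes of $X$ to $\overline{\rho}$-classes of $\overline{X}$, inducing the bijection $\theta$ between indexing sets; restricting $\phi$ to each $X_i$ gives the required family isomorphisms, and the defining formulas \Cref{multRPmdd} for $f_i^j$, $g_i^j$, $\overline{f}_{\theta(i)}^{\theta(j)}$, $\overline{g}_{\theta(i)}^{\theta(j)}$ in terms of the multiplications force the intertwining relations automatically. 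For \Cref{item:birpisofine}, the same logic applies after observing that $\phi$ induces an isomorphism of the entire posets of bi-P\l onka partitions from \Cref{def:biplonka}, hence of their unique maximal elements guaranteed by \Cref{th:biuniqpart}(2).

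The only step requiring real care—and the place most likely to hide a subtlety—is verifying that the congruence $\rho$ is genuinely determined by $(\cdot,*)$ alone (so as to be preserved by any isomorphism) and that the finest bi-P\l onka partition is indeed canonical in a way that survives the passage through $\phi$. Both facts are covered by \Cref{th:biuniqpart}, which reduces the present theorem to the bookkeeping sketched above; once those uniqueness statements are in hand, no further computation is needed.
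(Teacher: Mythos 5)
Your proposal is correct and follows essentially the same route as the paper: the paper in fact omits the proof of \Cref{th:biisopart}, stating that the argument for \Cref{th:isopart} carries over with minimal changes, and that argument is precisely what you reproduce — gluing the family isomorphisms via \Cref{multRPmdd} in one direction, and in the other using the intrinsic congruence $\rho$ from the proof of \Cref{th:strpbimag} for the coarsest partition and the induced isomorphism of the poset of bi-P\l onka partitions (hence of its unique maximal element from \Cref{th:biuniqpart}) for the finest. No gaps.
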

Following the definition of an ideal in a bi-magma, a non-empty subset $I \subseteq X$ is called a \emph{right-left ideal} of a P{\l}onka bi-magma $(X, \, \cdot, \, \ast)$ if it is simultaneously a right ideal of $(X, \, \cdot)$ and a left ideal of $(X, \, \ast)$. Furthermore, $(X, \, \cdot, \, \ast)$ is called a \emph{right-left ideal-simple} P{\l}onka bi-magma if it has no proper non-empty right-left ideals. 

\begin{remark}\label{re:Lybneb}
In the decomposition of a P{\l}onka bi-magma $(X, \, \cdot, \, \ast)$ given by Theorem \ref{th:strpbimag} we observe that each component $X_i \subseteq X$ of the partition is
a right ideal of the magma $(X, \, \cdot)$ and a left ideal of $(X, \, \ast)$, hence it is a right-left ideal of $(X, \, \cdot, \, \ast)$ in the sense of the above definition. In particular, each $X_i$ is a P{\l}onka sub-bi-magma of $(X, \, \cdot, \, \ast)$ and is a Lyubashenko bi-magma, i.e. it has the form $(X_i, \, \cdot_{f_i^i}, \ast^{g_i^i})$ as given in Example \ref{exs:bmlyb}(\ref{item:LB}). Thus, Theorem \ref{th:strpbimag} gives a decomposition of any P{\l}onka bi-magma into subalgebras that are Lyubashenko bi-magmas.

The same decomposition is singleton (that is $|I| = 1$) if and only if any two elements $a$, $b\in X$ are congruent in the sense of the proof of Theorem \ref{th:strpbimag}; this is equivalent to the fact that there exist two commuting self-functions $f$, $g: X \to X$ such that $(X, \, \cdot, \, \ast) = (X, \, \cdot_f, \ast^g)$, i.e.
the P{\l}onka bi-magma is a Lyubashenko bi-magma.
\end{remark}

\begin{example}\label{ex:rlsim}
Let $f$, $g : X \to X$ be two commuting self-functions and $X_f^g = (X, \, \cdot_f, \, \ast^g)$ the associated P{\l}onka bi-magma of Example \ref{exs:exkimbimag}(\ref{LyPl_can}).
Then $X_f^g$ is a right-left ideal-simple bi-magma if and only if $X$ is the only subset
simultaneously invariant under $f$ and $g$, i.e. $\{f, \, g\}$ is an incompressible pair of maps as defined in Definition \ref{def:simple}(\ref{item:incomprdef}).
\end{example}
Using Theorem \ref{th:strpbimag} and  Corollary \ref{cor:simpbimag} we are able to classify up to an isomorphism all right-left ideal-simple P{\l}onka bi-magmas:

\begin{theorem}\label{cor:simpbimagb} Let $X$ be a set. Then:
  \begin{enumerate}[(1)]

  \item\label{simp11} A P{\l}onka bi-magma $(X, \, \cdot, \, \ast)$ is right-left ideal-simple if and only if there exists an incompressible pair $\{f, \, g\}$ of commuting maps
    $f$, $g: X\to X$ such that $(X, \, \cdot, \, \ast) = (X, \, \cdot_f, \, \ast^g)$.

  \item\label{simp12} The set of isomorphism classes of all right-left ideal-simple P{\l}onka bi-magmas on $X$ is parameterized by the set of all triples  $(m,\ n,\ d)\in \bZ_{\ge 0}\times \bZ_{>0}^2$, where $d\le |\bZ/m|$. The bijective correspondence is given such that the P{\l}onka bi-magma $(X, \, \cdot_f, \, \ast^g)$ associated to $(m,\ n,\ d)\in \bZ_{\ge 0}\times \bZ_{>0}^2$ with $(X,\ f,\ g)$ as in Corollary \ref{cor:simpbimag}.


  \item\label{sim100} If $X$ is a finite set with $|X| = t$, then the number of isomorphism classes of all right-left ideal-simple P{\l}onka bi-magmas
    on $X$ is $\sigma (t)$, the sum of all divisors of $t$.
  \end{enumerate}
\end{theorem}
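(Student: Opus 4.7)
The plan is to deduce all three parts from already-established results: the structure theorem \Cref{th:strpbimag}, \Cref{re:Lybneb} (which identifies each partition component as a right-left ideal and recognizes it as a Lyubashenko bi-magma), \Cref{ex:rlsim} (which characterizes right-left simplicity of Lyubashenko bi-magmas), and finally \Cref{cor:simpbimag} (which classifies incompressible commuting pairs).

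For part \Cref{simp11}, I would start from an arbitrary P{\l}onka bi-magma $(X,\cdot,\ast)$ and invoke the coarsest bi-P{\l}onka decomposition $X=\coprod_{i\in I} X_i$ of \Cref{th:strpbimag}. By \Cref{re:Lybneb}, every component $X_i$ is a right-left ideal and is itself a Lyubashenko sub-bi-magma $(X_i,\cdot_{f_i^i},\ast^{g_i^i})$. Right-left simplicity forces $|I|=1$, so $(X,\cdot,\ast)=(X,\cdot_f,\ast^g)$ for commuting $f,g:X\to X$. Then \Cref{ex:rlsim} identifies right-left simplicity of such a Lyubashenko bi-magma with the incompressibility of the pair $\{f,g\}$. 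The converse direction is immediate from \Cref{ex:rlsim}.

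For part \Cref{simp12}, part \Cref{simp11} reduces the classification problem to classifying commuting incompressible pairs $\{f,g\}$ up to isomorphism of the underlying data $(X,f,g)$, since two Lyubashenko bi-magmas $(X,\cdot_f,\ast^g)$ and $(X',\cdot_{f'},\ast^{g'})$ are isomorphic as P{\l}onka bi-magmas exactly when the triples $(X,f,g)$ and $(X',f',g')$ are isomorphic. But this is precisely the content of \Cref{cor:simpbimag}, which furnishes the bijection with triples $(m,n,d)\in\bZ_{\ge 0}\times\bZ_{>0}^2$ satisfying $d\le|\bZ/m|$, together with the explicit description of $f$ and $g$.

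For part \Cref{sim100}, I need to count triples $(m,n,d)$ corresponding to a finite set with $|X|=mn=t$. The constraint $m=0$ is excluded (as it would give infinite $X$), so $m,n\ge 1$ with $mn=t$ and $d\in\{1,2,\ldots,m\}$. For each divisor $m$ of $t$, the value $n=t/m$ is determined and there are exactly $m$ choices of $d$, yielding
\begin{equation*}
\sum_{m\mid t} m \;=\; \sigma(t)
\end{equation*}
as desired. I do not anticipate any serious obstacle: part \Cref{simp11} is essentially a one-line deduction from the structure theorem, part \Cref{simp12} is a direct transport of \Cref{cor:simpbimag} through the correspondence of part \Cref{simp11}, and part \Cref{sim100} is a straightforward enumeration. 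The only mild care is in part \Cref{simp12}, where one must verify that isomorphism of P{\l}onka bi-magmas of Lyubashenko type matches isomorphism of the parametrizing data in \Cref{cor:simpbimag}; this follows directly from the definition of morphisms in ${\rm PBiMag}$ applied to the explicit multiplications $x\cdot_f y=f(x)$ and $x\ast^g y=g(y)$.
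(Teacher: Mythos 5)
Your proposal is correct and follows essentially the same route as the paper: decompose via \Cref{th:strpbimag}, use \Cref{re:Lybneb} to force $|I|=1$ and land on a Lyubashenko bi-magma, apply \Cref{ex:rlsim} for incompressibility, transport the classification from \Cref{cor:simpbimag}, and count $\sum_{m\mid t} m = \sigma(t)$. The only (welcome) addition is your explicit remark that isomorphism of Lyubashenko-type P{\l}onka bi-magmas agrees with isomorphism of the data $(X,f,g)$, a point the paper leaves implicit.
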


\begin{proof}
  We have seen in Example \ref{ex:rlsim} that, for two commuting self-maps $f$, $g : X \to X$, the P{\l}onka bi-magma $(X, \, \cdot_f, \, \ast^g)$ is right-left ideal-simple if and only if $\{f, \, g\}$ is an incompressible pair of maps as defined in Definition \ref{def:simple}(\ref{item:incomprdef}).

 Now, let $(X, \, \cdot, \, \ast)$ be a right-left ideal-simple P{\l}onka bi-magma. Then it has a decomposition $X = \coprod_{i\in I} \, X_i$ as given by Theorem \ref{th:strpbimag}. As we observe in Remark \ref{re:Lybneb} each component $X_i$ of the partition is a right-left ideal of $(X, \, \cdot, \, \ast)$. Since $X$ is right-left ideal-simple, we obtain that $|I| = 1$ and hence using the last observation of Remark \ref{re:Lybneb}, there exist two functions $f$, $g : X \to X$ such that $(X, \, \cdot, \, \ast) = (X, \, \cdot_f, \, \ast^g)$. This proves the first statement.

 The second statement follows from Corollary \ref{cor:simpbimag} which, using Example \ref{exs:exsolsimple}(\ref{doiideal}), in fact classifies all commuting incompressible pairs $\{f, \, g\}$. Finally, by applying Corollary \ref{cor:simpbimagb}(\ref{simp12}) we obtain that the number of isomorphism classes of all right-left ideal-simple P{\l}onka bi-magmas of order $t$ is precisely the sum of the divisors of $t$. Indeed, for each positive integer $m | t$, there are exactly $m$ possibilities to choose $d$. This completes the proof.
\end{proof}

\begin{theorem}\label{th:teorema12}
  Let $(X, \, \cdot, \, \ast)$ be a P{\l}onka bi-magma and consider the  canonical associated YB-solution $R = R_{(\cdot, \, \ast)} : X\times X \to X\times X$, $R (x, \, y) = (x\cdot y, \, x\ast y)$. Then:

  \begin{enumerate}[(1)]
  \item\label{harary2} $R$ is bi-connected if and only if there exists a connected pair $(f, \, g)$ of two commuting maps
    $f$, $g: X \to X$ such that $R = R_{(f, \, g)}$, i.e. $R (x, \, y) = (f(x), \, g(y))$, for all $x$, $y\in X$.

  \item $R$ is ideal-simple in the sense of Definition \ref{def:solsimple} if and only if there exists an incompressible pair $\{f, \, g\}$ (Definition \ref{def:simple}(\ref{item:incomprdef}))
    of commuting maps $f$, $g: X\to X$ such that $R = R_{(f, \, g)}$. Thus, they are fully classified by Corollary \ref{cor:simpbimag} and Corollary \ref{cor:simpbimagb}(\ref{simp12}).
  \end{enumerate}
\end{theorem}

\begin{proof}
  \begin{enumerate}[(1)]
  \item The proof follows from the structure of P{\l}onka bi-magmas as given by Theorem \ref{th:strpbimag}. Indeed, since the multiplications $\cdot$ and $\ast$ on $X$ are given by \eqref{eq:comfunctddd}, we obtain that $R (X_i \times X_i) \subseteq X_i \times X_i$, for all $i\in I$.  The conclusion now follows from Definition \ref{def:connfunc}(\ref{item:bi-conn}) of bi-connected maps: the index set $I$ of the decomposition $X = \coprod_{i\in I} \, X_i$ has to be a singleton.

  \item Using Remark \ref{re:simplepov}(\ref{item:draci}) we obtain that $R$ is an ideal-simple solution if and only if $(X, \, \cdot, \, \ast)$ is a right-left ideal-simple P{\l}onka bi-magma. Now we apply Corollary \ref{cor:simpbimagb} and this finishes the proof.
  \end{enumerate}
\end{proof}

\begin{corollary}\label{cor:famI2}
  Let $(X, \, \cdot)$ be a right P{\l}onka magma and consider the associated YB-solution $R = R_{(\cdot, \, \cdot_{\rm op})} : X\times X \to X\times X$, $R (x, \, y) = (x\cdot y, \, y\cdot x)
  $. Then:
  \begin{enumerate}[(1)]
  \item $R$ is bi-connected if and only if there exists a connected map $f: X \to X$ such that $R(x, \, y) = (f(x), \, f(y))$, for all $x$, $y\in X$.
    In particular, the number of isomorphism classes of bi-connected solutions of the form \eqref{eq:famII} associated to right P{\l}onka magmas of order $n$, is
    the Harary number $\mathfrak{c} (n)$ \cite[Definition 3.6]{acm1}.

  \item $R$ is ideal-simple in the sense of Definition \ref{def:solsimple} if and only if $X$ is a finite set and there exists a cycle
    $f: X \to X$ of length $|X|$ such that $R (x, \, y) = (f(x), \, f(y))$, for all $x$, $y\in X$. Any two such ideal-simple solutions associated to a
    finite right P{\l}onka magma $(X, \cdot)$ are isomorphic.
  \end{enumerate}
\end{corollary}

\begin{proof}
For (1) we use first the structure theorem of right P{\l}onka magmas (Theorem \ref{th:structRPm}) since in the decomposition $X = \coprod_{i\in I} \, X_i$ given by Theorem \ref{th:structRPm} we have $R (X_i \times X_i) \subseteq X_i \times X_i$, for all $i\in I$. Thus, $R$ is bi-connected if and only if $|I| = 1$ and there exists a connected map $f: X \to X$ such that $x\cdot y = f(x)$, for all $x$, $y\in X$. The last statement of (1) follows from here and \cite[Definition 3.6]{acm1}.

As for (2) we observe that $I$ is an ideal of $R = R_{(\cdot, \, \cdot_{\rm op})}$ as defined in Definition \ref{def:solsimple} if and only if $I$ is a right ideal in the right P{\l}onka magma $(X, \cdot)$. Thus, $R$ is an ideal-simple solution if and only if the right P{\l}onka magma $(X, \cdot)$ is right ideal-simple.  Now we just apply Theorem \ref{th:simplestr} and this finishes the proof.
\end{proof}






\addcontentsline{toc}{section}{References}



\Addresses

\end{document}